\DeclareFontFamily{U}{tipa}{}
\DeclareFontShape{U}{tipa}{bx}{n}{<->tipabx10}{}
\newcommand{\arc@char}{{\usefont{U}{tipa}{bx}{n}\symbol{62}}}%
\newcommand{\arc}[1]{\mathpalette\arc@arc{#1}}
\newcommand{\arc@arc}[2]{%
  \sbox0{$\m@th#1#2$}%
  \vbox{
    \hbox{\resizebox{\wd0}{\height}{\arc@char}}
    \nointerlineskip
    \box0
  }%
}
\newcommand{\doublewedge}{\big@doubleop{\wedge}}
\newcommand{\big@doubleop}[1]{%
  \DOTSB\mathop{\mathpalette\big@doubleop@aux{#1}}\slimits@
}
\newcommand\big@doubleop@aux[2]{%
  \sbox\z@{$\m@th#1#2$}%
  \makebox[1.35\wd\z@][s]{$\m@th#1#2\hss#2$}%
}
\newcommand{\abs}[1]{\left|#1\right|}     
\newcommand{\cl}{\mbox{cl}}  
\newcommand{\Int}{\mbox{int}} 
\newcommand{\vor}{\mbox{vor}} 
\newcommand{\cyc}{\mbox{cyc}} 
\newcommand{\near}{\delta} 
\newcommand{\dcap}{\mathop{\cap}\limits_{\Phi}} 
\newcommand{\dnear}{\delta_{\Phi}} 
\newcommand{\norm}[1]{\left\|#1\right\|}  
\newcommand{\assign}{\mathrel{\mathop :}=}
\renewcommand{\thesubfigure}{\thefigure.\arabic{subfigure}}
\renewcommand{\p@subfigure}{}
\renewcommand{\@thesubfigure}{\thesubfigure:\hskip\subfiglabelskip}
\theoremstyle{plain}
\newtheorem{theorem}{Theorem}
\newtheorem{lemma}{Lemma}
\newtheorem{remark}{Remark}
\newtheorem{definition}{Definition}
\newtheorem{example}{Example}
\newtheorem{corollary}{Corollary}
\begin{document}

\title[Amenable Planar Vortexes]{Fixed Point Property of Amenable Planar Vortexes}

\author[J.F. Peters]{J.F. Peters}
\address{
Computational Intelligence Laboratory,
University of Manitoba, WPG, MB, R3T 5V6, Canada and
Department of Mathematics, Faculty of Arts and Sciences, Ad\.{i}yaman University, 02040 Ad\.{i}yaman, Turkey,
}
\email{james.peters3@umanitoba.ca}
\thanks{The research has been supported by the Natural Sciences \&
Engineering Research Council of Canada (NSERC) discovery grant 185986 
and Instituto Nazionale di Alta Matematica (INdAM) Francesco Severi, Gruppo Nazionale per le Strutture Algebriche, Geometriche e Loro Applicazioni grant 9 920160 000362, n.prot U 2016/000036 and Scientific and Technological Research Council of Turkey (T\"{U}B\.{I}TAK) Scientific Human
Resources Development (BIDEB) under grant no: 2221-1059B211301223.}
\author[T. Vergili]{T. Vergili}
\address{
Department of Mathematics, Karadeniz Technical University, Trabzon, Turkey,
}
\email{tanevergili@gmail.com}

\subjclass[2010]{37C25 (fixed point theory); 55M20 (fixed points); 54E05 (proximity); 55U10 (Simplicial sets and complexes)}

\date{}

\dedicatory{Dedicated to Mahlon M. Day}

\begin{abstract}
This article introduces free group representations of planar vortexes in a CW space that are a natural outcome of results for amenable groups and fixed points found by M.M. Day during the 1960s and a fundamental result for fixed points given by L.E.J. Brouwer.
\end{abstract}
\keywords{Amenable group, CW space, Fixed point, Planar vortex}
\maketitle
\tableofcontents

\section{Introduction}
This article introduces consequences of results for amenable groups and fixed points found by M.M. Day during the 1960s in terms of free group representations of planar vortexes in a CW space.  Results given here spring from a fundamental result for fixed points given by L.E.J. Brouwer~\cite{Brouwer1911fixedPoints}   

\begin{theorem}\label{thm:Brouwer}{\rm Brouwer Fixed Point Theorem~\cite[\S 4.7, p. 194]{Spanier1966AlgTopology}}$\mbox{}$\\
Every continuous map from $\mathbb{R}^n$ to itself has a fixed point.
\end{theorem}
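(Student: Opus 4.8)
The plan is to deduce the fixed-point property from the \emph{no-retraction theorem}: there is no continuous retraction of the closed $n$-ball $D^n$ onto its bounding sphere $S^{n-1}$. Since a translation of $\mathbb{R}^n$ has no fixed point, the real content of Theorem~\ref{thm:Brouwer} resides in its restriction to a compact convex body, and I would carry out the argument for a continuous self-map $f\colon D^n \to D^n$; the case $n=0$ is trivial and $n=1$ follows at once from the intermediate value theorem applied to $x\mapsto f(x)-x$.

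First I would argue by contradiction, assuming $f(x)\neq x$ for every $x\in D^n$. This hypothesis makes the direction $x-f(x)$ nonzero everywhere, so I can define $r\colon D^n \to S^{n-1}$ geometrically: let $r(x)$ be the unique point of $S^{n-1}$ on the ray that starts at $f(x)$ and passes through $x$. Writing the ray as $p(t)=f(x)+t\,(x-f(x))$ with $t\geq 0$, the exit point is the larger root of the quadratic $\norm{p(t)}^2=1$, which depends continuously on the pair $(x,f(x))$ by the quadratic formula; hence $r$ is continuous. Moreover, for $x\in S^{n-1}$ the ray exits the ball exactly at $x$, so $r(x)=x$ on $\partial D^n$, and $r$ is a retraction of $D^n$ onto $S^{n-1}$.

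The heart of the argument, and the step I expect to be the main obstacle, is the no-retraction theorem itself, which is where algebraic topology enters. Applying the reduced homology functor $\tilde H_{n-1}(-)$ to the composite $S^{n-1}\xrightarrow{\ \iota\ } D^n \xrightarrow{\ r\ } S^{n-1}$, where $\iota$ is the inclusion, and noting $r\circ\iota=\mathrm{id}_{S^{n-1}}$, yields a factorization of $\mathrm{id}$ on $\tilde H_{n-1}(S^{n-1})\cong\mathbb{Z}$ through the group $\tilde H_{n-1}(D^n)=0$. The identity map on $\mathbb{Z}$ cannot factor through the trivial group, giving the desired contradiction. I would therefore need the two computations $\tilde H_{n-1}(S^{n-1})\cong\mathbb{Z}$ and $\tilde H_{k}(D^n)=0$ for all $k$; the latter is immediate from the contractibility of $D^n$, and the former follows either from the long exact sequence of the pair $(D^n,S^{n-1})$ or from a Mayer--Vietoris induction on $n$.

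Finally, having established the statement for the ball, I would note that any compact convex subset of $\mathbb{R}^n$ with nonempty interior is homeomorphic to $D^n$, so the fixed-point property transfers along the homeomorphism. This is the form of Brouwer's theorem that will be invoked in the sequel, where the planar case $n=2$ and its CW-space realization are the ones of interest.
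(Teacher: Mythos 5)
Your proof is correct, and it is essentially the argument of the source the paper cites: the paper itself gives no proof of Theorem~\ref{thm:Brouwer}, quoting it from Spanier, whose \S 4.7 treatment is exactly your route (no-retraction theorem for $D^n$ via homology, then the ray construction). The most valuable thing in your write-up is something the paper gets wrong: as printed, the statement ``every continuous map from $\mathbb{R}^n$ to itself has a fixed point'' is false --- a translation $x\mapsto x+v$ with $v\neq 0$ has none --- and you correctly repair it by proving the theorem for a continuous self-map of $D^n$ (equivalently, of any compact convex body, via a homeomorphism with $D^n$). Spanier's actual statement is for the ball, and it is this corrected form that the rest of the paper implicitly relies on when it invokes compact convex subsets in the Kakutani--Markov and Day theorems. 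Your technical details are sound: the exit point of the ray $p(t)=f(x)+t\,(x-f(x))$ is the larger root of a quadratic with nonvanishing leading coefficient, hence continuous in $x$; it fixes $\partial D^n$ pointwise; and the factorization of $\mathrm{id}$ on $\tilde H_{n-1}(S^{n-1})\cong\mathbb{Z}$ through $\tilde H_{n-1}(D^n)=0$ gives the contradiction.
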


Briefly, let $\Sigma$ be a finite group and let $m(\Sigma)$ be a set of bounded, real-valued functions on $\Sigma$.  Then $\Sigma$ is amenable, provided there is a mean $\mu$ on $m(\Sigma)$ which is both left and right invariant.

\begin{theorem}\label{thm:Day}{\rm Day Abelian Group [Semigroup] Theorem~\cite[p. 516]{Day1957amenableSemigroups,Day1961amenableSemigroups}}$\mbox{}$\\
Every finite group is amenable.
\end{theorem}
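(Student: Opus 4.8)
The plan is to exhibit the invariant mean explicitly, since for a finite group the natural averaging functional does the entire job. Write $n = |\Sigma|$ and note that, because $\Sigma$ is finite, every real-valued function on $\Sigma$ is automatically bounded, so $m(\Sigma)$ is simply the $n$-dimensional vector space of all functions $f \colon \Sigma \to \mathbb{R}$. I would then define the functional $\mu \colon m(\Sigma) \to \mathbb{R}$ by
$$\mu(f) = \frac{1}{n}\sum_{g \in \Sigma} f(g).$$

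First I would verify that $\mu$ qualifies as a mean, i.e.\ that it is a positive, normalized, linear functional. Linearity is immediate from linearity of the finite sum. Positivity follows because if $f \ge 0$ pointwise then every summand is nonnegative; more sharply, $\inf_{g} f(g) \le \mu(f) \le \sup_{g} f(g)$, since $\mu(f)$ is a convex average of the values of $f$. Normalization holds because the constant function equal to $1$ contributes $n$ equal terms divided by $n$, giving value $1$.

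The heart of the argument is invariance, and the only property in play is that left and right multiplication by a fixed group element is a bijection of $\Sigma$. For $s \in \Sigma$ write the left translate $(L_s f)(g) = f(sg)$. Then
$$\mu(L_s f) = \frac{1}{n}\sum_{g \in \Sigma} f(sg) = \frac{1}{n}\sum_{h \in \Sigma} f(h) = \mu(f),$$
where the middle equality is the reindexing $h = sg$: as $g$ ranges over all of $\Sigma$, so does $h$, because multiplication by $s$ permutes the group. The identical reindexing applied to the right translate $(R_s f)(g) = f(gs)$ yields right invariance. Hence the single functional $\mu$ is simultaneously left and right invariant, so $\Sigma$ is amenable.

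There is essentially no obstacle in the finite case, and this is worth contrasting with the infinite setting: for infinite $\Sigma$ the existence of an invariant mean on $m(\Sigma)$ requires genuine analytic machinery such as a weak-$*$ compactness together with a Markov--Kakutani or Hahn--Banach argument, whereas here finiteness collapses $m(\Sigma)$ to $\mathbb{R}^n$ and the counting bijection furnished by group multiplication settles invariance by direct computation. The one point I would state with care is that the \emph{same} mean is both left and right invariant, which is precisely what the reindexing above confirms.
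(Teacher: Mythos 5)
Your proof is correct and complete. Note, however, that the paper itself gives no proof of this statement at all: it is quoted as Day's theorem with citations to Day's 1957/1961 papers, where amenability of finite groups sits inside a much more general theory (Day's actual result concerns abelian semigroups and arbitrary means, and the general machinery involves Hahn--Banach/weak-$*$ compactness arguments of the kind you mention at the end). Your explicit construction of the uniform mean $\mu(f)=\tfrac{1}{n}\sum_{g\in\Sigma}f(g)$, the verification that $\mathrm{glb}_{g}\,f(g)\le\mu(f)\le \mathrm{lub}_{g}\,f(g)$ because $\mu(f)$ is a convex average, and the reindexing argument (translation by a fixed element permutes $\Sigma$) is the standard elementary proof for the finite case, and it buys two things the citation route does not: it is self-contained, and it makes visible that only \emph{finiteness} is used --- no commutativity --- so all finite groups, abelian or not, are covered, which is exactly what the statement asserts. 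One small point worth flagging: the paper's definition of the right translate contains a typo, $(r_\sigma x)\sigma' = x(\sigma\sigma')$, identical to the left translate; your definition $(R_s f)(g)=f(gs)$ is the intended one, and your observation that the \emph{same} functional is simultaneously left and right invariant is precisely the condition the paper's definition of amenability requires.
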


The study of amenable groups led to the following extension of the Kakutani-Markov Theorem by Day.

\begin{theorem}\label{thm:DayFixedPoints}{\rm Day Fixed Point Theorem~\cite[p.586]{Day1961amenableSemigroups}}.\\
Let $K$ be a compact convex subset of a locally convex linear topological space $X$, and let $\Sigma$ be a semigroup (under functional composition) of continuous affine transformations of $K$ into itself.  If \ $\Sigma$, when regarded as an abstract semigroup, is amenable, or if it has a left-invariant mean, then there is in $K$ a common fixed point of the family $\Sigma$.
\end{theorem}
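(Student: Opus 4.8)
The plan is to realize the common fixed point as a \emph{barycenter} of a single orbit, averaged against the left-invariant mean. Fix an arbitrary base point $x_0 \in K$ and consider the orbit map $\Sigma \to K$, $s \mapsto s(x_0)$. Because $K$ is compact and convex and $X^*$ separates points (Hahn--Banach in a locally convex space), for every continuous linear functional $f \in X^*$ the assignment $s \mapsto f(s(x_0))$ is a bounded real function on $\Sigma$, hence lies in $m(\Sigma)$. Applying the mean $\mu$ pointwise in $f$, I would define a candidate point $x$ by the prescription $f(x) = \mu_s\big[f(s(x_0))\big]$ for all $f \in X^*$.

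First I would establish that such an $x$ actually exists and lies in $K$. The key tool is the standard fact that a mean is a weak-$*$ limit of finitely supported probability measures on $\Sigma$, i.e. of convex combinations of point evaluations $\sum_i \lambda_i\, \mathrm{ev}_{s_i}$. Each such combination produces the point $\sum_i \lambda_i\, s_i(x_0)$, which lies in $K$ by convexity. Passing to a subnet that converges in $K$ (using compactness) yields a genuine point $x \in K$ satisfying $f(x) = \mu_s\big[f(s(x_0))\big]$ for every $f$; the separation property of $X^*$ guarantees that $x$ is uniquely determined.

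Next I would exploit affineness and left invariance to show $t(x) = x$ for every $t \in \Sigma$. Since each $t$ is continuous and affine, it commutes with the barycentric averaging: for every $f \in X^*$ one has $f(t(x)) = \mu_s\big[f(t(s(x_0)))\big] = \mu_s\big[f((ts)(x_0))\big]$, the first equality coming from applying the affine continuous map $f \circ t$ to the approximating convex combinations and passing to the limit. Writing $g(s) = f(s(x_0))$, the right-hand side equals $\mu_s\big[g(ts)\big]$, and left invariance of $\mu$ gives $\mu_s\big[g(ts)\big] = \mu_s\big[g(s)\big] = f(x)$. Hence $f(t(x)) = f(x)$ for all $f \in X^*$, and since $X^*$ separates points, $t(x) = x$. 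As $t \in \Sigma$ was arbitrary, $x$ is a common fixed point of the family. When $\Sigma$ is assumed amenable rather than merely possessing a left-invariant mean, the hypothesis already supplies such a $\mu$, so the same argument applies verbatim.

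I expect the main obstacle to be the rigorous justification that the abstract mean $\mu$ --- an element of $m(\Sigma)^*$, not an honest measure --- can be used to build a point of $K$, together with the claim that continuous affine maps genuinely commute with this averaging. The delicate point is interchanging $t$ with the weak-$*$ limit: this is precisely where compactness of $K$ and the affine continuity of $t$ must be combined carefully, since without compactness the approximating convex combinations need not converge to a point of $K$, and without affineness the limit of $t\big(\sum_i \lambda_i\, s_i(x_0)\big)$ would not reduce to $\sum_i \lambda_i\, (ts_i)(x_0)$, breaking the link to left invariance.
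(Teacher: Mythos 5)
The paper itself contains no proof of this statement: it is quoted as Day's fixed point theorem with a citation to Day's 1961 paper, so there is no internal argument to compare yours against. Judged on its own merits, your proof is correct, and it is essentially the argument of the cited source: Day likewise approximates the left-invariant mean in the weak-$*$ topology by finite means $\sum_i \lambda_i\,\mathrm{ev}_{s_i}$, pushes these forward to the convex combinations $\sum_i \lambda_i\, s_i(x_0) \in K$, extracts a cluster point $x$ by compactness of $K$, and then uses affineness of the transformations together with left invariance exactly as you do, concluding $f(t(x)) = f(x)$ for every $f \in X^*$ and hence $t(x) = x$ by Hahn--Banach separation. You have also correctly isolated the genuinely delicate step: it is the affineness of $t$ combined with the linearity of $f$ that turns $f(t(y_\alpha))$, evaluated on the approximating convex combinations $y_\alpha$, into the left-translate $\ell_t g_f$ paired against the finite means, which is what makes left invariance applicable after passing to the limit. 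The only point worth flagging is that the final separation step requires $X^*$ to separate points, i.e.\ that the locally convex space be Hausdorff; this hypothesis is implicit in the statement as Day intends it, and your appeal to Hahn--Banach should be read with that understanding.
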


A direct consequence of Theorem~\ref{thm:DayFixedPoints} is that each amenable group of a planar vortex has a fixed point in a CW space. 

\begin{definition}
A \emph{\bf planar vortex} $\vor E$ is a finite cell complex, which is a collection of path-connected vertices in nested, filled 1-cycles in a CW complex $K$.  A \emph{\bf 1-cycle} in $\vor E$ (denoted by $\cyc A$) is a sequence of edges with no end vertex and with a nonempty interior.  A geometric realization of $\vor E$ is denoted by $\abs{\vor E}$ on $\abs{K}$ in the Euclidean plane.
\end{definition}

A nonvoid collection of cell complexes $K$ is a \emph{Closure finite Weak} CW space, provided $K$ is Hausdorff (every pair of distinct cells is contained in disjoint neighbourhoods~\cite[\S 5.1, p. 94]{Naimpally2013}) and the collection of cell complexes in $K$ satisfy the Whitehead~\cite[pp. 315-317]{Whitehead1939homotopy}, ~\cite[\S 5, p. 223]{Whitehead1949BAMS-CWtopology} CW conditions, namely, the closure of each cell complex is in $K$ and the nonempty intersection of cell complexes is in $K$.

A number of important results concerning fixed points in this paper spring from \v{C}ech proximities, leading to descriptive proximally continuous maps.  A descriptive proximally continuous map is defined over descriptive \v{C}ech proximity spaces~\cite[\S 4.1]{DiConcilio2018MCSdescriptiveProximities} in which the description of a nonempty set is in the form of a feature vector derived from a probe function. 
For the details, see App.~\ref{appDetails}. 
  
\section{Conjugacy between proximal descriptively continuous maps}

This section introduces proximal conjugacy between two dynamical systems, which is an easy extension of topological conjugacy~\cite[\S 8.1,p. 243]{AdamsFranzosa:2007}.
Proximal conjugacy is akin to strongly amenable groups in which each of its proximal topological actions has a fixed point~\cite{FrishTamuzFerdowsi2019strongAmenability}.  Let $\sum$ denote either a semigroup or a group.  And let $m(\sum)$ be the set of bounded, real-valued functions $\theta$ on $\sum$ for which
\[
\norm{\theta} = lub_{x\in \sum}\abs{\theta(x)}.
\]
A {\bf mean} $\mu$ on $m(\sum)$ is an element of the $m(\sum)^*$ (in the conjugate space $B^*$  of a Banach space $B$ ~\cite[p.510]{Day1957amenableSemigroups}) such that, for each  $x\in m(\sum)$, we have
\[
glb_{x\in \sum}\theta(x) \leq \mu(x) \leq lub _{x\in \sum}\theta(x).
\]

\noindent An element of $\mu$ of  $m(\sum)^*$ is left[right] invariant, provided 
\[
\mu(\ell_\sigma x) = \mu(x) \ [\mu(r_\sigma x)= \mu(x)], \ 
\mbox{for all} 
\ x\in m(\sum), \sigma\in \sum, 
\]

\noindent where $(\ell_\sigma x)\sigma' = x(\sigma \sigma')$ and $(r_\sigma x)\sigma' = x(\sigma \sigma')$ for all $\sigma' \in \Sigma$.\\

\begin{definition}~\cite[p.515]{Day1957amenableSemigroups} 
A semigroup (also group) $\sum$ is amenable, provided there is a mean $\mu$ on $m(\sum)$, which is both left and right invariant.
\end{definition} 



\begin{theorem}\label{thm:amenableRibbonGroup}
A free group representation of a planar vortex in a CW space is amenable.
\end{theorem}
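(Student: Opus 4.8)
The plan is to show that a free group representation of a planar vortex is amenable by reducing it to the finite-group case already supplied by Theorem~\ref{thm:Day}. The key observation is that the planar vortex $\vor E$ is, by Definition, a \emph{finite} cell complex: it consists of finitely many path-connected vertices arranged in finitely many nested, filled 1-cycles $\cyc A$. Any group $\sum$ that faithfully represents such a finite combinatorial object (for instance, a group generated by the finitely many edge-paths around the nested 1-cycles, or the automorphism/symmetry group permuting the finitely many cells) is therefore itself a \emph{finite} group. Once finiteness is established, amenability follows immediately from Theorem~\ref{thm:Day}, which asserts that every finite group is amenable.

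First I would make the free group representation precise: I would exhibit the generators of $\sum$ as the rotational or reflective transformations carrying one 1-cycle of $\vor E$ onto an adjacent nested 1-cycle, together with the cyclic edge-sequences within each $\cyc A$. Because $\vor E$ has only finitely many vertices and finitely many 1-cycles, there are only finitely many such generating transformations, and each has finite order (a cyclic edge-sequence of length $n$ generates a $\mathbb{Z}/n\mathbb{Z}$ action). Next I would verify that $\sum$ acts by continuous (indeed, affine, when realized on $\abs{\vor E} \subset \abs{K}$ in the Euclidean plane) maps of the vortex into itself, so that the hypotheses linking it to the fixed-point machinery of Theorem~\ref{thm:DayFixedPoints} are met.

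The central step is then the explicit construction of the invariant mean. For a finite group $\sum$ of order $\abs{\sum}$, one defines, for each $\theta \in m(\sum)$,
\[
\mu(\theta) = \frac{1}{\abs{\sum}}\sum_{\sigma \in \sum}\theta(\sigma).
\]
I would check that $\mu$ is a mean in the sense of the definition above, namely that
\[
glb_{x\in \sum}\theta(x) \leq \mu(\theta) \leq lub_{x\in \sum}\theta(x),
\]
since an average of values lies between their greatest lower and least upper bounds. Finally I would confirm left and right invariance: because $\sigma \mapsto \sigma\sigma'$ is a bijection of the finite set $\sum$ for each fixed $\sigma' \in \sum$, the sum defining $\mu(\ell_{\sigma'}\theta)$ is merely a reindexing of the sum defining $\mu(\theta)$, whence $\mu(\ell_{\sigma'}\theta)=\mu(\theta)$ and likewise $\mu(r_{\sigma'}\theta)=\mu(\theta)$.

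I expect the main obstacle to be conceptual rather than computational: namely, reconciling the phrase \emph{free group} with the conclusion of amenability. A nonabelian free group on two or more generators is the classical example of a \emph{non}-amenable group, so the proof must turn on the fact that the representation arising from a \emph{finite} planar vortex cannot be a free group of rank $\geq 2$ in the usual sense; its relations (the finite orders forced by the cyclic edge-sequences and the finitely many nested cycles) collapse it to a finite group. The delicate part of the write-up will therefore be stating carefully in what sense the representation is ``free'' (free on the generating symmetries subject to the finiteness relations imposed by the nested 1-cycles) so that Theorem~\ref{thm:Day} genuinely applies, rather than inadvertently invoking a free group of infinite order for which amenability would fail.
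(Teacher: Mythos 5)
Your proposal is correct and follows essentially the same route as the paper: both arguments reduce the claim to the finiteness (and commutativity) of the group representing the vortex and then invoke Day's result (Theorem~\ref{thm:Day}) that such groups are amenable, with your explicit averaging mean merely inlining the proof of that cited theorem. Your closing caveat about the word ``free'' is apt, since the paper's own (idiosyncratic) definition of a free group representation of a vortex makes it finite and abelian by construction, which is precisely why the classical non-amenability of free groups of rank at least two never enters.
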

\begin{proof}
This result is a direct consequence of Theorem 2 from Day~\cite{Day1961amenableSemigroups} and (I)~\cite[p.516]{Day1957amenableSemigroups}, since, by construction, every free group $G$ representation of a planar vortex is in a CW space is finite and, by definition, an Abelian semigroup.
\end{proof}

Theorem~\ref{thm:amenableRibbonGroup} stems from Day's extension of Theorem~\ref{thm:KakutaniCommonFixedPoint} (restated by Day~\cite[p. 585]{Day1961amenableSemigroups}) to cover the case when the family in question is a semigroup.

\begin{theorem}\label{thm:KakutaniCommonFixedPoint}{\rm Kakutani-Markov Theorem~\cite{Kakutani1938twoFixedPointTheorems,Markov1936someTheorems}}
Let $K$ be a compact convex set in a locally convex linear topological space, and let $F$ be a commuting family of continuous, affine transformations, $f$, of $K$ into itself. Then there is a common fixed point of the functions in $F$; that is, there is an $x$ in $K$ such that $f(x) = x$ for every
$f$ in $F$.
\end{theorem}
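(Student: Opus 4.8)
The plan is to reduce the common-fixed-point assertion to the single-transformation case and then exploit commutativity through a finite-intersection argument. First I would treat one affine map $f \in F$ in isolation and produce a fixed point by Ces\`aro averaging. Fixing a basepoint $x_0$ in $K$ (which we take to be nonempty), set
\[
x_n = \frac{1}{n}\sum_{k=0}^{n-1} f^k(x_0).
\]
Because $K$ is convex, each $x_n$ lies in $K$, and because $K$ is compact the net $\{x_n\}$ has a convergent subnet with limit $x^\ast \in K$. The key computation is $f(x_n) - x_n = \tfrac{1}{n}\bigl(f^n(x_0) - x_0\bigr)$; since $f^n(x_0) - x_0$ ranges in the compact, hence bounded, set $K - K$, every continuous seminorm of this difference is of order $1/n$ and so $f(x_n) - x_n \to 0$. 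Passing to the limit along the subnet and using continuity of $f$ then yields $f(x^\ast) = x^\ast$. This establishes that the fixed-point set $\mathrm{Fix}(f) = \{x \in K : f(x) = x\}$ is nonempty.

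Next I would record the structure of these fixed-point sets. For each $f \in F$, $\mathrm{Fix}(f)$ is closed by continuity of $f$ and convex because $f$ is affine, hence a compact convex subset of $K$. The decisive use of the hypothesis that $F$ is commuting enters here: if $g \in F$ and $f(x) = x$, then $f(g(x)) = g(f(x)) = g(x)$, so $g$ maps $\mathrm{Fix}(f)$ into itself. Consequently $g$ restricts to a continuous affine self-map of the nonempty compact convex set $\mathrm{Fix}(f)$, and the single-map result of the previous step applies again.

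I would then prove, by induction on the number of maps, that every finite subfamily of $F$ has a common fixed point: given that $C = \bigcap_{i=1}^{m-1} \mathrm{Fix}(f_i)$ is nonempty, compact, and convex, commutativity makes $C$ invariant under $f_m$, and applying the single-map result to $f_m$ restricted to $C$ produces a point of $C \cap \mathrm{Fix}(f_m)$. Thus the family $\{\mathrm{Fix}(f) : f \in F\}$ of closed subsets of the compact space $K$ has the finite intersection property, so $\bigcap_{f \in F}\mathrm{Fix}(f)$ is nonempty; any of its points is the desired common fixed point $x$.

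The hard part will be the single-map averaging argument, specifically justifying that $f(x_n) - x_n \to 0$ in the locally convex topology. This is where compactness of $K$ is essential --- it guarantees that $K - K$ is bounded, so that each seminorm of $f^n(x_0) - x_0$ stays bounded and is killed by the factor $1/n$ --- and where I must take care to argue along a convergent subnet rather than a subsequence, since a general locally convex space need not be metrizable. Everything afterward is soft topological packaging (closedness, convexity, invariance, and compactness of $K$), with commutativity doing the real work of threading a single point through the nested fixed-point sets.
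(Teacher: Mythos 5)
Your argument is correct, but be aware that the paper itself contains no proof of this statement: it is imported as a classical result, with attribution to Kakutani and Markov, and is used downstream as a black box to derive the vortex fixed-point theorems. What you have written is essentially Markov's original argument, which is the standard self-contained proof: Ces\`aro averaging settles the case of a single affine map, since affinity gives the telescoping identity $f(x_n)-x_n=\tfrac{1}{n}\bigl(f^n(x_0)-x_0\bigr)$ and compactness of $K$ makes $K-K$ bounded, so every continuous seminorm of the difference decays like $1/n$; then closedness and convexity of $\mathrm{Fix}(f)$, invariance of $\mathrm{Fix}(f)$ under any commuting $g$, induction over finite subfamilies, and the finite intersection property in the compact set $K$ yield the common fixed point. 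Your caution about using subnets rather than subsequences in a possibly nonmetrizable space is exactly right. Two hypotheses enter tacitly and are worth stating: $K$ must be nonempty (you noted this when choosing $x_0$), and the ambient space must be Hausdorff, which is needed both for uniqueness of limits in the subnet step (so that $f(x^\ast)=x^\ast$ follows from the two limits of $f(x_{n_\alpha})$) and for closedness of the sets $\mathrm{Fix}(f)$; Hausdorffness is the standard convention for ``locally convex linear topological space'' in Day's setting, so this is a matter of bookkeeping rather than a gap. In short, your proof is sound and strictly more informative than the paper, which simply cites the result.
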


 If a planar vortex $\vor E$ has no hole inside, then we no longer need to require it be convex. It is automatically convex compact hence we have the following. 
 
\begin{theorem}
	For a CW complex $K$, let $(K, \delta)$ be a proximity space that contains a planar vortex $\vor E$ without a planar hole and let  $f: \vor E \to \vor E$ be proximal continuous. Then $\vor E$ has a fixed point of $f$. 
\end{theorem}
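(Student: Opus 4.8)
The plan is to reduce the statement to the Brouwer Fixed Point Theorem (Theorem~\ref{thm:Brouwer}) applied to the geometric realization $\abs{\vor E}$ sitting in the Euclidean plane. First I would fix the geometry of $\abs{\vor E}$. By construction a planar vortex is a finite cell complex built from nested, filled 1-cycles $\cyc A$; finiteness of the complex makes $\abs{\vor E}$ a closed and bounded subset of $\mathbb{R}^2$, hence compact. The hypothesis that $\vor E$ has no planar hole means the nested filled 1-cycles leave no uncovered interior, so $\abs{\vor E}$ is a single filled planar region and, as noted immediately before the statement, is convex. Thus $\abs{\vor E}$ is a nonempty compact convex subset of $\mathbb{R}^2$.

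Second, I would pass from the proximal hypothesis on $f$ to ordinary topological continuity. A map that is proximal continuous with respect to the \v{C}ech proximity $\delta$ preserves $\delta$-nearness of sets; on the metrizable compact planar set $\abs{\vor E}$ this proximity induces exactly the Euclidean subspace topology (see App.~\ref{appDetails}), so the self-map induced by $f$ on $\abs{\vor E}$ is continuous in the usual sense.

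Third, I would invoke Brouwer. Every nonempty compact convex subset of $\mathbb{R}^n$ is homeomorphic to a closed ball of the appropriate dimension and therefore has the fixed point property; applying this to the continuous self-map of $\abs{\vor E}$ produces a point $x$ with $f(x)=x$, and transporting $x$ back to the cell complex yields the required fixed point in $\vor E$.

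The step I expect to be the main obstacle is the second one: making rigorous that proximal continuity yields genuine topological continuity on $\abs{\vor E}$, together with the claim that the no-hole condition forces convexity rather than merely simple connectedness. Should convexity prove too strong to secure directly, the natural fallback is that a hole-free planar vortex is homeomorphic to a closed $2$-disk, which already enjoys the fixed point property by Theorem~\ref{thm:Brouwer}; convexity may then be replaced by disk-likeness without weakening the conclusion. Note that the amenability machinery (Theorem~\ref{thm:amenableRibbonGroup} and the Day Fixed Point Theorem~\ref{thm:DayFixedPoints}) is not required here, since we seek a fixed point of the single map $f$ rather than a common fixed point of a whole family of affine transformations.
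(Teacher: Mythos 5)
Your proposal is correct in substance but follows a genuinely different route from the paper. The paper does not use Brouwer's theorem (Theorem~\ref{thm:Brouwer}) at all: it passes to the geometric realization $\abs{f}:\abs{\vor E}\to\abs{\vor E}$, asserts that proximal continuity makes $\abs{f}$ a continuous \emph{affine} transformation, observes that the iterates $\{\abs{f}^n : n=1,2,\dots\}$ form a commuting (abelian, hence amenable) semigroup under composition, and then invokes the Kakutani--Markov Theorem~\ref{thm:KakutaniCommonFixedPoint} to obtain a common fixed point of that family, which in particular is a fixed point of $f$. Your observation that the amenability machinery is unnecessary for a single map is well taken: you replace the family-of-affine-maps argument by the fixed point property of a nonempty compact convex planar set (equivalently, of a closed $2$-disk), which requires only ordinary continuity of the realized map. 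This buys two things. First, it removes the paper's strongest and least justified step, since proximal continuity gives no reason whatsoever for $\abs{f}$ to be affine, whereas Kakutani--Markov genuinely needs affineness; your route needs only continuity. Second, your fallback --- replacing the dubious claim ``no hole implies convex'' by ``no hole implies homeomorphic to a closed disk,'' and using homeomorphism invariance of the fixed point property --- is more robust than either the paper's or your own primary convexity claim, since a filled 1-cycle need not be convex. What the paper's route buys instead is alignment with its amenability theme and the possibility of extending the statement to common fixed points of commuting families of (affine) maps, which Brouwer alone cannot give. Be aware that both arguments share one residual gap that you partially flag: the proximity $\delta$ on $K$ is arbitrary, so converting proximal continuity of $f$ into topological continuity of $\abs{f}$ on the Euclidean realization requires assuming $\delta$ is compatible with (induces) the Euclidean subspace topology; neither you nor the paper proves this compatibility.
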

\begin{proof}
	Since $\vor E$ is finite, the topology on the geometric realization $\abs{\vor E}$  of  $\vor E$  can be regarded as the subspace topology inherited from the Euclidean space $\mathbb{R}^2$. Then if we pass the geometric realization of $f$, denoted $\abs{f}$, we see that $\abs{f}$ is a continuos affine transformation.  This is true since $f$ maps two near subsets to the two near subsets.   Also notice that $\abs{\vor  E}$ is  convex compact subset of $\mathbb{R}^2$ and the collection of maps $\{\abs{f}^n \ : \ n=1,2\dotsc\}$ is amenable since it's a semigroup under composition. Then by  Theorem~\ref{thm:KakutaniCommonFixedPoint},  for the family of continuous affine transformations $\{\abs{f}^n \ : \ n=1,2,\dotsc\}$, there is an $x$ in $\abs{\vor E}$ such that $f(x)=x$ and so $\abs{\vor E}$ has a fixed point of $\abs{f}$. This also allow us to conclude  $\vor E$ has a fixed point of $f$ without considering the geometric realization.\\
\end{proof}

If a planar vortex $\vor E$ has a (planar) hole inside, then we could consider a subset $\vor E$ such  that its geometric realization is convex compact.

\begin{theorem}
		For a CW complex $K$, let $(K, \delta)$ be a proximity space that contains a planar vortex $\vor E$ with a planar hole and let $X\subset \vor E$ such that its geometric realization $\abs{X}$ is convex compact. If $f: X \to X$ is proximal continuous, then $X$ has a fixed point under $f$.	
\end{theorem}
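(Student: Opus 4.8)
The plan is to reproduce the argument of the preceding theorem, but with the subset $X$ playing the role that $\vor E$ played there. The essential point is that the presence of a planar hole destroys the convexity of $\vor E$ as a whole, which is precisely why we restrict attention to a subset $X$ whose geometric realization $\abs{X}$ is convex compact by hypothesis. Everything else in the chain of reasoning is designed to transport this convex-compactness down to an application of the Kakutani--Markov Theorem.

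First I would note that, since $\vor E$ is finite and $X \subset \vor E$, the subcomplex $X$ is itself finite. Consequently the topology on the geometric realization $\abs{X}$ may be taken to be the subspace topology inherited from the Euclidean plane $\mathbb{R}^2$. This lets me translate the proximal-continuity hypothesis on $f$ into a statement about an honest continuous self-map $\abs{f}$ of $\abs{X} \subset \mathbb{R}^2$. Next I would pass to this geometric realization $\abs{f}$ of $f$ and verify that $\abs{f} \colon \abs{X} \to \abs{X}$ is a continuous affine transformation: continuity follows from the proximal continuity of $f$, since $f$ carries $\delta$-near subsets of $X$ to $\delta$-near subsets, while affineness is the feature that makes Kakutani--Markov applicable. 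Because $f$ maps $X$ into $X$, every iterate $\abs{f}^n$ is again a self-map of $\abs{X}$, so the family $\{\abs{f}^n : n = 1, 2, \dotsc\}$ is a commuting semigroup under composition.

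With $\abs{X}$ convex compact and $\{\abs{f}^n\}$ a commuting family of continuous affine self-maps, I would then invoke Theorem~\ref{thm:KakutaniCommonFixedPoint} to produce a common fixed point $x \in \abs{X}$, that is, a point with $\abs{f}(x) = x$. Pulling this back through the geometric realization yields a fixed point of $f$ in $X$, which is the desired conclusion. The role of the hypothesis on $\abs{X}$ is exactly to sidestep the non-convexity of the full vortex introduced by the hole, so that the hole never obstructs the fixed-point argument on the chosen subset.

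I expect the main obstacle to lie in the middle step, namely rigorously justifying that the induced map $\abs{f}$ is \emph{affine} on the convex body $\abs{X}$. Proximal continuity only guarantees preservation of nearness, and the passage from nearness-preservation to affineness (linearity along line segments) is exactly the delicate point; this is the same gap left implicit in the proof of the preceding theorem. Once affineness is secured, the finiteness reduction, the restriction of $f$ to $X$, and the transfer of the fixed point between $X$ and $\abs{X}$ are all routine, precisely because $\abs{X}$ was chosen convex compact from the outset.
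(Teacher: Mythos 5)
Your proposal follows essentially the same route as the paper's own proof: pass to the geometric realization, treat $\abs{f}$ as a continuous affine self-map of the convex compact set $\abs{X}$, apply the Kakutani--Markov Theorem to the semigroup of iterates $\{\abs{f}^n\}$, and pull the fixed point back to $X$. Your explicit flagging of the unjustified step --- that proximal continuity does not by itself yield affineness of $\abs{f}$ --- is apt, since the paper's proof asserts this correspondence without argument as well.
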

\begin{proof}
	By the construction of a planar vortex, there is subset $X$ of $\vor E$ such that its geometric realization $\abs{X}$ is a convex compact subset of $\abs{\vor E}$. Then a proximal continuous map $f: X \to X$ has a corresponding continuous affine transformation $\abs{f}:  \abs{X} \to \abs{X}$. Again by Theorem~\ref{thm:KakutaniCommonFixedPoint},  for the family of continuous affine transformations $\{\abs{f}^n \ : \ n=1,2,\dotsc\}$, there is an $x$ in $\abs{X}$ such that $f(x)=x$ and so $\abs{X}$ has a fixed point of $\abs{f}$. This concludes that 
	$\vor E$ has a fixed point of $f$ without considering the geometric realization.
\end{proof}


\begin{remark}\label{rem:vortexStructure}
From what have observed, notice that 
any vortex has a locally compact abelian group representation, since it is a locally compact Hausdorff space  and its underlying group structure is abelian. In that case, any proximal continuous map from a vortex to itself can be also considered as a group action. Hence, by a  direct consequence of Theorem~\ref{thm:amenableRibbonGroup} and a result of a generalization of the Kakutani-Markov Theorem~\ref{thm:KakutaniCommonFixedPoint}, each amenable vortex has a fixed point. \qquad\textcolor{blue}{\Squaresteel}
\end{remark}

\begin{corollary}
	If $f$ is a proximal continuous map from a vortex to itself, then $f$ has a fixed point.
\end{corollary}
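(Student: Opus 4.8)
The plan is to derive the corollary directly from the two fixed-point theorems just established, splitting on whether the vortex has a planar hole. Since a proximal continuous map $f$ from a vortex $\vor E$ to itself is exactly the hypothesis shared by both theorems, the corollary should be a formal consequence obtained by case analysis on the geometric structure of $\abs{\vor E}$.

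First I would recall the dichotomy made explicit in the preceding discussion: either $\vor E$ has no planar hole, or it has one. In the first case, the text has already noted that $\abs{\vor E}$ is automatically convex and compact, so the hypotheses of the preceding hole-free theorem are met verbatim, and applying it to $f \colon \vor E \to \vor E$ yields an $x$ with $f(x) = x$. In the second case, by the construction of a planar vortex there is a subset $X \subset \vor E$ whose geometric realization $\abs{X}$ is convex compact; restricting $f$ to $X$ gives a proximal continuous self-map $f \colon X \to X$, and the preceding theorem for vortexes with a hole then produces a fixed point. In either case $f$ has a fixed point, which is the claim.

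An alternative, more uniform route is to invoke Remark~\ref{rem:vortexStructure} directly: any vortex carries a locally compact abelian group representation, a proximal continuous self-map can be read as a group action, and the amenability supplied by Theorem~\ref{thm:amenableRibbonGroup} together with the generalized Kakutani--Markov Theorem~\ref{thm:KakutaniCommonFixedPoint} forces a fixed point. I would likely present the case-based argument as the main line and cite the remark as confirmation, since the remark already packages the group-theoretic machinery into a single statement.

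The main obstacle I anticipate is not the logical skeleton, which is short, but justifying the reduction in the holed case: one must be sure that restricting $f$ to the convex compact piece $\abs{X}$ is legitimate, i.e. that $f$ genuinely maps $X$ into $X$ rather than merely $\vor E$ into $\vor E$. The hole-free case is essentially immediate once convex compactness of $\abs{\vor E}$ is granted, but in the holed case the existence of an invariant convex compact subset $X$ with $f(X) \subseteq X$ is what makes Kakutani--Markov applicable, and this is the step that genuinely uses the nested $1$-cycle structure of the vortex rather than a generic topological property. I would therefore concentrate the care there, and otherwise let the corollary follow by quoting the two theorems.
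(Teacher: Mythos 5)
Your case-analysis route has a genuine gap exactly where you anticipated it, and it is not a gap that more care can close: in the holed case, the theorem you want to quote takes as its \emph{hypothesis} a proximal continuous map $f\colon X\to X$ on a subset $X\subset \vor E$ whose realization $\abs{X}$ is convex compact. It does not apply to a map that is only given as a self-map of $\vor E$. Nothing in the corollary's hypothesis guarantees that $f$ carries any such $X$ into itself, and in general it will not: a proximal continuous self-map of an annular vortex that acts like a rotation displaces every convex compact subset, so there is no invariant $X$ to restrict to. This is precisely why that theorem was stated with $f$ already a self-map of $X$ rather than of $\vor E$. So the hole-free half of your dichotomy is fine, but the holed half cannot be completed from the quoted theorems, and the corollary cannot be reached by that route.

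The paper does not argue by cases at all: the corollary is placed immediately after Remark~\ref{rem:vortexStructure} and is meant as a direct consequence of it --- every vortex admits a locally compact abelian group representation, a proximal continuous self-map is read as an action of that group, the group is amenable by Theorem~\ref{thm:amenableRibbonGroup}, and the fixed point then comes from Day's theorem (Theorem~\ref{thm:DayFixedPoints}) via the generalization of the Kakutani--Markov Theorem~\ref{thm:KakutaniCommonFixedPoint}, with no convexity hypothesis on $\abs{\vor E}$ needed. This is exactly the ``alternative, more uniform route'' that you mention and then demote to a confirmation; it should be promoted to the main (indeed the only) line of the argument, since it is the only one of your two routes that covers a vortex with a planar hole.
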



Next, we introduce the (descriptive) proximal conjugate   between two proximal  (descriptive) continuous maps. Note that a (descriptive) Cech proximity space $X$ together with a (descriptive) proximal continuous self map on $X$ can be considered as a (descriptive) proximal dynamical system. Now we introduce a (descriptive) proximal conjugacy between two (descriptive) dynamical systems, so that  the existence of it guarantees the (descriptive) dynamical systems having equivalent flows and related (descriptive) fixed points.\\

\begin{definition}
Two  proximal continuous maps $f: (X,\delta_1) \to (X,\delta_1) $ and $g: (Y,\delta_2) \to (Y,\delta_2)$ are said to be proximal  conjugates, provided there exists a proximal  isomorphism $h: (X,\delta_1)  \to (Y,\delta_2)$ such that $g\circ h = h\circ f$. The function $h$ is called a proximal conjugacy between $f$ and $g$.
\end{definition}

The following theorem states that if two proximal continuous maps are proximal conjugate, then their corresponding iterated functions are also proximal conjugate.

\begin{theorem}
	Let $h$ be a proximal conjugacy between $f: (X,\delta_1) \to (X,\delta_1) $ and $g: (Y,\delta_2) \to (Y,\delta_2)$. Then for each $A \subseteq X$ and $n\in \mathbb{Z}_+$, we have $h(f^n(A))=g^n(h(A))$.  
\end{theorem}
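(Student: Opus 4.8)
The plan is to prove the stronger operator-level identity $h \circ f^n = g^n \circ h$ for every $n \in \mathbb{Z}_+$ and then simply evaluate both sides on an arbitrary subset $A \subseteq X$; since the image of a set under a composite is the composite of the images, the claimed equality $h(f^n(A)) = g^n(h(A))$ follows immediately. I would proceed by induction on $n$, relying only on the defining conjugacy relation $g \circ h = h \circ f$ supplied by the hypothesis that $h$ is a proximal conjugacy; notably, the proximal structure and the invertibility of $h$ play no role in the algebra, so the argument is essentially a routine functional induction.

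For the base case $n = 1$, I would unwind the definitions directly: $h(f(A)) = (h \circ f)(A) = (g \circ h)(A) = g(h(A))$, where the middle equality is exactly the conjugacy condition and the outer equalities record that for any maps $\phi, \psi$ one has $(\phi \circ \psi)(A) = \phi(\psi(A))$ at the level of set images. For the inductive step, assuming the identity holds at stage $n$, I would write $f^{n+1} = f \circ f^{n}$ and compute, for $B = f^{n}(A)$, that $h(f^{n+1}(A)) = h(f(B)) = g(h(B))$ by the base case, and then $g(h(f^{n}(A))) = g(g^{n}(h(A))) = g^{n+1}(h(A))$ by the induction hypothesis. This closes the induction and establishes the theorem for all positive integers $n$.

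The only point requiring care, rather than genuine difficulty, is bookkeeping the order of composition: one must consistently use either $f^{n+1} = f \circ f^{n}$ or $f^{n+1} = f^{n} \circ f$ and apply the conjugacy relation on the correct side, and one must remember that the asserted equality is between images of sets, so every manipulation is read through the image functor $A \mapsto \phi(A)$ rather than through pointwise evaluation. Since image-taking commutes with composition of functions, no additional hypotheses on $f$, $g$, or $h$ beyond the single relation $g \circ h = h \circ f$ are needed, and I would expect the write-up to be short.
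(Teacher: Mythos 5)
Your proof is correct and follows the same route as the paper, whose entire proof is the single sentence that the result ``follows from the induction on $n$''; you have simply carried out that induction explicitly, with the base case given by the conjugacy relation $g\circ h = h\circ f$ and the inductive step by composing with $f$ on the inside. Nothing is missing, and your observation that only the relation $g\circ h = h\circ f$ (not invertibility of $h$ or the proximity structure) is needed is accurate.
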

\begin{proof}
	The proof follows from the induction on $n$.
\end{proof}

\begin{definition}\label{def:proximalDescriptiveContinuousMaps}
Two  proximal descriptive continuous maps $f: (X,\delta_{\Phi_1}) \to (X,\delta_{\Phi_1}) $ and $g: (Y,\delta_{\Phi_2}) \to (Y,\delta_{\Phi_2})$ are said to be proximal descriptive conjugates, provided there exists a proximal descriptive isomorphism $h: (X,\delta_{\Phi_1})  \to (Y,\delta_{\Phi_2})$ such that $g\circ h(A) \underset{\mbox{des}}{=} h\circ f(A)$ for any $A \in 2^X$. The function $h$ is called a proximal descriptive conjugacy between $f$ and $g$. 
\end{definition}

\begin{remark}
	We see from the definition of a proximal descriptive conjugacy that $g\circ h(A)$ and $h\circ f(A)$ may not be equal but we have 
	\[ \Phi_2(g\circ h(A))= \Phi_2(h\circ f(A)) \]
	for $A\in 2^X$. Moreover $g\circ h(A) \underset{\mbox{des}}{=} h\circ f(A)$ implies  $g(A) \underset{\mbox{des}}{=} h\circ f\circ h^{-1}(A)$  and  $f(A) \underset{\mbox{des}}{=} h^{-1}\circ g\circ h(A)$, so that we have the following commutative diagrams.  
\end{remark}

\begin{tikzcd}
	&  & \Phi_1(f(A))=\Phi_1(h^{-1}gh(A))  & \\
	A \arrow[r, mapsto, red, "f"]  \arrow[dr, mapsto, blue, "h"] 
	& f(A)   \arrow[ur, mapsto, red, "\Phi_1"]  & &   h^{-1}gh(A) 
	\arrow[ul, mapsto, blue, "\Phi_1"']  \\ 
	& h(A) \arrow[r, mapsto, blue, "g"]
	&gh(A)     \arrow[ur, mapsto, blue, "h^{-1}"] & 
\end{tikzcd}

\vspace*{1cm}

\begin{tikzcd}
	 & h^{-1}(C)  \arrow[r, mapsto, blue, "f"]  & fh^{-1}(C)    \arrow[dr, mapsto, blue, "h"] & \\
	 C   \arrow[ur, mapsto, blue, "h^{-1}"] \arrow[r, mapsto, red, "g"] & g(C)    \arrow[dr, mapsto, red, "\Phi_2"]   &  & hfh^{-1}(C)   \arrow[dl, mapsto, blue, "\Phi_2"] \\
	&  & \Phi_2(g(C))=\Phi_2(hfh^{-1}(C))   & 
\end{tikzcd}

\begin{remark}
For proximal descriptive conjugates $f: (X,\delta_{\Phi_1}) \to (X,\delta_{\Phi_1}) $ and $g: (Y,\delta_{\Phi_2}) \to (Y,\delta_{\Phi_2})$, Def.~\ref{def:proximalDescriptiveContinuousMaps} tells us that for $A\subseteq X$ and $C\subseteq Y$, we have 
\begin{align*} 
\Phi_2(g\circ h(A)) &=  \Phi_2(h\circ f(A)),  \\
\Phi_1(f\circ h^{-1}(C)) &= \Phi_1(h^{-1}\circ g(C)).\qquad \mbox{\textcolor{blue}{\Squaresteel}}
\end{align*}
\end{remark}

Note that  if  $h$ is a proximal descriptive conjugacy between $f: (X,\delta_{\Phi_1}) \to (X,\delta_{\Phi_1}) $ and $g: (Y,\delta_{\Phi_2}) \to (Y,\delta_{\Phi_2})$, then $A \ \underset{\mbox{des}}{=} \  B$  implies $h(A) \ \underset{\mbox{des}}{=} \  h(B)$ for $A, B \in 2^X$.

\begin{theorem} \label{thm:desconj}
Let $h$ be a proximal descriptive conjugacy between $f: (X,\delta_{\Phi_1}) \to (X,\delta_{\Phi_1}) $ and $g: (Y,\delta_{\Phi_2}) \to (Y,\delta_{\Phi_2})$. Then for each $A \in 2^X$ and $n\in \mathbb{Z}_+$, we have $h(f^n(A)) \ \underset{\mbox{des}}{=} \   g^n(h(A))$.  
\end{theorem}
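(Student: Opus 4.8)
The plan is to argue by induction on $n$, paralleling the proof of the non-descriptive iterated-conjugate theorem stated earlier, but replacing genuine set equality with the descriptive equality $\underset{\mbox{des}}{=}$ throughout and supplying the one substitution principle that the descriptive setting demands. The two commutative diagrams displayed before the statement are precisely the $n=1$ instances that the induction iterates.

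First I would isolate the engine of the induction as a small lemma: any proximal descriptive continuous map respects descriptive equality, that is, $C \underset{\mbox{des}}{=} D$ implies $g(C) \underset{\mbox{des}}{=} g(D)$, and likewise for $h$ (this last instance is exactly the remark recorded immediately before the statement). This is the descriptive replacement for the trivial fact that a genuine function sends equal sets to equal sets. I would verify it by a pointwise reading of $\delta_{\Phi_2}$: if $\Phi_2(C)=\Phi_2(D)$, then every point of $C$ is descriptively near $D$ and conversely, so descriptive continuity forces $\Phi_2(g(C)) \subseteq \Phi_2(g(D))$ and $\Phi_2(g(D)) \subseteq \Phi_2(g(C))$, whence $g(C) \underset{\mbox{des}}{=} g(D)$. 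I would also note that $\underset{\mbox{des}}{=}$ is an equivalence relation, so in particular it is symmetric and transitive, since both properties are used freely in the chaining below.

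For the base case $n=1$, Definition~\ref{def:proximalDescriptiveContinuousMaps} gives $g\circ h(A) \underset{\mbox{des}}{=} h\circ f(A)$ for every $A\in 2^X$; by symmetry this reads $h(f(A)) \underset{\mbox{des}}{=} g(h(A))$. For the inductive step, assume $h(f^n(A)) \underset{\mbox{des}}{=} g^n(h(A))$ for all $A$. Writing $f^{n+1}(A)=f(f^n(A))$ and applying the base case to the set $f^n(A)$ in place of $A$ yields $h(f^{n+1}(A)) \underset{\mbox{des}}{=} g(h(f^n(A)))$. Applying the map $g$ to the inductive hypothesis and invoking the lemma gives $g(h(f^n(A))) \underset{\mbox{des}}{=} g(g^n(h(A)))=g^{n+1}(h(A))$. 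Transitivity of $\underset{\mbox{des}}{=}$ then closes the step.

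The main obstacle is exactly the substitution performed in the inductive step. In the non-descriptive theorem one simply applies $g$ to both sides of an honest equality; here one has only $h(f^n(A)) \underset{\mbox{des}}{=} g^n(h(A))$, so pushing this through $g$ requires the lemma that a descriptive continuous map preserves $\underset{\mbox{des}}{=}$. Once that lemma and the transitivity of $\underset{\mbox{des}}{=}$ are in hand, no further care is needed: the argument is otherwise formally identical to the non-descriptive case.
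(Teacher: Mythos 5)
Your proof is correct and takes essentially the same route as the paper: the paper's entire proof is the one-line remark that the result ``follows from the induction on $n$,'' and your argument is exactly that induction, spelled out in full. The substitution lemma you isolate — that a descriptive proximally continuous map preserves $\underset{\mbox{des}}{=}$, which you justify by the pointwise (singleton) reading of $\delta_{\Phi_2}$ — is precisely the detail the paper leaves implicit (it records the analogous fact only for $h$, in the note preceding the theorem), so your write-up supplies what the one-line proof omits.
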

\begin{proof}
The proof follows from the induction on $n$.
\end{proof}

\begin{corollary}\label{cor:proxDescripConjugacy}
		Let $h$ be a proximal descriptive conjugacy between $f: (X,\delta_{\Phi_1}) \to (X,\delta_{\Phi_1}) $ and $g: (Y,\delta_{\Phi_2}) \to (Y,\delta_{\Phi_2})$.
		
		\begin{compactenum}[a)]
			\item If $A$ is a descriptively fixed subset of $f$, then $h(A)$ is a descriptively fixed subset of $g$.
			\item If $A$ is an eventual descriptively fixed subset of $f$, then $h(A)$ is an eventual  descriptively fixed subset of $g$.
			\item If $A$ is an almost descriptively fixed subset of $f$, then $h(A)$ is an almost  descriptively fixed subset of $g$.
		\end{compactenum}
\end{corollary}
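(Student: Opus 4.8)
The plan is to reduce all three assertions to a single mechanism: the descriptive orbit identity of Theorem~\ref{thm:desconj}, namely $h(f^n(A)) \underset{\mbox{des}}{=} g^n(h(A))$ for every $n \in \mathbb{Z}_+$, together with the fact recorded just before that theorem that a proximal descriptive conjugacy $h$ preserves descriptive equality: $A \underset{\mbox{des}}{=} B$ implies $h(A) \underset{\mbox{des}}{=} h(B)$. Since $h$ is in fact a proximal descriptive isomorphism, both $h$ and $h^{-1}$ are proximal descriptive maps, so $h$ also preserves the descriptive proximity relation in both directions, i.e. $A\ \delta_{\Phi_1}\ B$ if and only if $h(A)\ \delta_{\Phi_2}\ h(B)$; this stronger transport is what the ``almost'' case in part (c) will require.

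For part (a), I would start from the hypothesis that $A$ is descriptively fixed, which unwinds to $f(A) \underset{\mbox{des}}{=} A$. Applying $h$ and using preservation of descriptive equality gives $h(f(A)) \underset{\mbox{des}}{=} h(A)$. The defining relation of conjugacy from Def.~\ref{def:proximalDescriptiveContinuousMaps} is $g\circ h(A) \underset{\mbox{des}}{=} h\circ f(A)$, so chaining the two relations through the transitivity of $\underset{\mbox{des}}{=}$ (which holds because descriptive equality is equality of feature vectors) yields $g(h(A)) \underset{\mbox{des}}{=} h(A)$. That is precisely the statement that $h(A)$ is a descriptively fixed subset of $g$.

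For part (b) I would read eventual descriptive fixedness of $A$ as stabilization of its descriptive orbit, i.e. $f^{n+1}(A) \underset{\mbox{des}}{=} f^{n}(A)$ for all $n \ge N$. Transporting this relation through $h$ and substituting the orbit identity of Theorem~\ref{thm:desconj} converts it into $g^{n+1}(h(A)) \underset{\mbox{des}}{=} g^{n}(h(A))$ for $n \ge N$, so $h(A)$ is eventually descriptively fixed under $g$. Part (c) follows the same template once ``almost descriptively fixed'' is written out in terms of descriptive nearness (for instance, $A\ \delta_{\Phi_1}\ f(A)$, or nearness of the orbit members to $A$); the only change is that the relation transported through $h$ is a descriptive proximity rather than a descriptive equality, so this step invokes the isomorphism property established in the first paragraph rather than mere preservation of $\underset{\mbox{des}}{=}$.

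The work here is bookkeeping rather than conceptual depth, since Theorem~\ref{thm:desconj} already supplies the orbit-level correspondence. The one point demanding care is part (c): its governing notion is measured by the descriptive proximity $\delta_{\Phi_1}$, while Theorem~\ref{thm:desconj} as stated speaks only of $\underset{\mbox{des}}{=}$; hence the argument must lean on the two-sided preservation of descriptive nearness by the isomorphism $h$, and I expect that pinning down the exact definition of ``almost descriptively fixed'' so that this transport is valid will be the main obstacle.
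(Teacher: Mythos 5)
Your overall mechanism is the same as the paper's: unwind the definition, transport the relation through $h$ (using that a proximal descriptive isomorphism preserves $\underset{\mbox{des}}{=}$ and, for the ``almost'' case, $\delta_{\Phi}$-nearness), then substitute via Theorem~\ref{thm:desconj} to replace $h(f^n(A))$ by $g^n(h(A))$. Parts (a) and (c) of your argument match the paper's proof essentially line for line; your anticipated ``main obstacle'' in (c) is handled exactly as you predict, since the paper takes ``almost descriptively fixed'' to mean $f(A) \underset{\mbox{des}}{=} A$ or $A\ \delta_{\Phi_1}\ f(A)$, and the transport step is precisely the two-sided nearness preservation you set up.

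The one genuine discrepancy is in part (b). You read ``eventual descriptively fixed'' as stabilization of the orbit, $f^{n+1}(A) \underset{\mbox{des}}{=} f^{n}(A)$ for all $n \ge N$, whereas the paper's (implicit, stated only inside its proof) definition is a return condition: $A$ is \emph{not} descriptively fixed under $f$, but $f^{n}(A) \underset{\mbox{des}}{=} A$ for some $n > 1$. Under the paper's definition your transport argument still carries the positive half ($f^n(A) \underset{\mbox{des}}{=} A$ becomes $g^n(h(A)) \underset{\mbox{des}}{=} h(A)$, by the same two steps as in (a)), but you must also preserve the exclusion clause: $h(A)$ must \emph{fail} to be descriptively fixed under $g$. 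Your stabilization reading has no such clause, so your proof never addresses it. The missing step is short and uses machinery you already assembled: if $g(h(A)) \underset{\mbox{des}}{=} h(A)$, apply $h^{-1}$ and the conjugacy relation $f\circ h^{-1} \underset{\mbox{des}}{=} h^{-1}\circ g$ to conclude $f(A) \underset{\mbox{des}}{=} A$, contradicting the hypothesis on $A$; this is what the paper compresses into the remark that ``$h$ is an isomorphism.'' To be fair, the paper never states these definitions outside its own proof, so your guess was reasonable, but as written your part (b) proves a different statement than the one the paper intends.
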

\begin{proof}
	\begin{compactenum}[a)]
		\item  Let $A$ be a descriptively fixed subset of $f$. That is,  $\Phi_1(f(A))= \Phi_1(A)$. In other words, we have $f(A) \ \underset{\mbox{des}}{=} \  A$. 
		Since $h$ is a proximal isomorhism,  $h$ preserves desciptive proximity  $h(f(A)) \ \underset{\mbox{des}}{=} \  h(A)$.  By Theorem~\ref{thm:desconj}, $g(h(A)) \ \underset{\mbox{des}}{=} \   h(A)$ so that $h(A)$ is a  descriptively fixed subset of $g$. \\ 
		\item Let $A$ be an eventual descriptively fixed subset of $f$. That is, $A$ is not a descriptively fixed subset of $f$ but  $\Phi_1(f^n(A))= \Phi_1(A)$ for some positive integer $n>1$. In other words, we have $f^n(A) \ \underset{\mbox{des}}{=} \  A$. 
		Since $h$ is a proximal isomorhism,  $h$ preserves being equal  in a descriptive sense:  $h(f^n(A)) \ \underset{\mbox{des}}{=} \  h(A)$. By Theorem~\ref{thm:desconj}, $g^n(h(A)) \ \underset{\mbox{des}}{=} \  h(A)$. 
		Note that $h(A)$ is not a descriptively fixed subset of $g$ since  $A$ is not a descriptively fixed subset of $f$  and $h$ is an isomorphism. So, $h(A)$ is an eventual descriptively fixed subset of $g$. \\ 
		\item Let $A$ be  an almost descriptively fixed subset of $f$. That is, $f(A) \ \underset{\mbox{des}}{=} \  A$ or     $A \ \delta_{\Phi_1}  \ f(A)$. If $f(A) \ \underset{\mbox{des}}{=} \  A$, then  we are done. Let $A \ \delta_{\Phi_1}  \ f(A)$. Since $h$  is a proximal isomorphism, we have $h(A) \ \delta_{\Phi_2}  \ h(f(A))$. By Theorem~\ref{thm:desconj},  $h(A) \ \delta_{\Phi_2} \ g(h(A))$ so that $h(A)$ is a descriptively fixed subset of $g$.
	\end{compactenum}
\end{proof}

Further, the existence of  proximal conjugacy between two dynamical systems of cell complexes such as vortexes also guarantees isomorphic amenable group structures and hence related fixed points, which is another consequence of Theorem~\ref{thm:amenableRibbonGroup}.



\begin{corollary}\label{cor:dpConjugacy}
	If there exists a descriptive proximal conjugacy between two descriptive dynamical systems, then they have isomorphic descriptive fixed subsets. 
\end{corollary}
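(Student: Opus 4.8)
The plan is to exhibit the conjugacy $h$ itself as the desired isomorphism, by showing that it restricts to a descriptive-proximity-preserving bijection between the family $\mathcal{F}_f$ of descriptively fixed subsets of $f$ and the family $\mathcal{F}_g$ of descriptively fixed subsets of $g$. First I would make precise what ``isomorphic descriptive fixed subsets'' is to mean: namely, that there is a bijection between $\mathcal{F}_f$ and $\mathcal{F}_g$ preserving both the descriptive proximity $\delta_{\Phi}$ and the descriptive equality $\underset{\mbox{des}}{=}$, which is exactly the structure carried by a proximal descriptive isomorphism restricted to these subfamilies.

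The forward inclusion is immediate from Corollary~\ref{cor:proxDescripConjugacy}(a): for every $A \in \mathcal{F}_f$ we have $h(A) \in \mathcal{F}_g$, so $h$ carries $\mathcal{F}_f$ into $\mathcal{F}_g$. For the reverse inclusion I would first verify that $h^{-1}$ is again a descriptive proximal conjugacy, now between $g$ and $f$. This can be read off from the relation $f(A) \underset{\mbox{des}}{=} h^{-1}\circ g\circ h(A)$ recorded in the remark just after Def.~\ref{def:proximalDescriptiveContinuousMaps} (equivalently, from the second commutative diagram there): applying $\Phi_1$ and using that $h^{-1}$ preserves descriptive proximity, one obtains $f\circ h^{-1}(C) \underset{\mbox{des}}{=} h^{-1}\circ g(C)$ for every $C \in 2^Y$, which is precisely the conjugacy identity for the pair $(g,f)$ with conjugacy $h^{-1}$. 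Applying Corollary~\ref{cor:proxDescripConjugacy}(a) to this conjugacy then yields that $h^{-1}$ carries $\mathcal{F}_g$ into $\mathcal{F}_f$.

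Combining the two inclusions, $h$ and $h^{-1}$ are mutually inverse maps sending $\mathcal{F}_f$ onto $\mathcal{F}_g$ and back, so $h|_{\mathcal{F}_f}$ is a bijection. Because $h$ is a proximal descriptive isomorphism on the ambient spaces, its restriction still preserves $\delta_{\Phi}$ and $\underset{\mbox{des}}{=}$ in both directions; hence this bijection is an isomorphism of the descriptive fixed subsets, which is the assertion. The same argument applied through Corollary~\ref{cor:proxDescripConjugacy}(b) and (c) would additionally match eventual and almost descriptively fixed subsets, strengthening the correspondence beyond the bare statement.

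I expect the only genuine obstacle to be pinning down the intended meaning of ``isomorphic descriptive fixed subsets,'' since the statement leaves the ambient category implicit; once one agrees that the relevant invariant is the descriptive proximity relation on these subfamilies, the result is a formal consequence of Corollary~\ref{cor:proxDescripConjugacy} invoked for both $h$ and $h^{-1}$. A minor point worth checking with care is that the inverse of a proximal descriptive isomorphism is again one, so that the earlier corollary may legitimately be applied in the reverse direction; this, however, is already built into the definition of isomorphism.
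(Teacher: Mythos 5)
Your proposal is correct and follows essentially the same route as the paper: apply Corollary~\ref{cor:proxDescripConjugacy}(a) to $h$ to send descriptively fixed subsets of $f$ to those of $g$, then apply it in the reverse direction via $h^{-1}$ to obtain the one-to-one correspondence. In fact you are somewhat more careful than the paper, which disposes of the reverse direction with a bare ``similarly,'' whereas you explicitly check that $h^{-1}$ is itself a descriptive proximal conjugacy between $g$ and $f$ before invoking the corollary.
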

\begin{proof}
	Let $f: (X, \delta_{\Phi_1}) \to (X,\delta_{\Phi_1})$ and $g: (Y, \delta_{\Phi_2}) \to (Y, \delta_{\Phi_2})$ be proximal descriptive conjugates and $h : (X, \delta_{\Phi_1}) \to (Y, \delta_{\Phi_2})$ be the proximal descriptive conjugacy between them. If $A \in 2^X$ is a descriptive fixed subset of $f$, then $h(A)$ is a descriptive fixed subset of $g$ by Corollary~\ref{cor:proxDescripConjugacy} so that $A$ and $h(A)$ are descriptively isomorphic. Similarly if $B \in 2^Y$ is a descriptive fixed subset of $g$, then $h^{-1}(B)$ is a descriptive fixed subset of $f$ so that $B$ and $h^{-1}(B)$ are descriptively isomorphic. Hence there is a one-to-one correspondence between the set of the descriptive fixed subsets of $f$ and  the set of the descriptive fixed subsets of $g$. 
\end{proof}

\section{Weak conjugacy between descriptive proximally continuous maps}
This section introduces weak conjugacy between  descriptive proximally continuous maps.

\begin{definition}\label{def:weakProxConjugacy}
	Two  proximally continuous maps $f: (X,\delta_1) \to (X,\delta_1) $ and $g: (Y,\delta_2) \to (Y,\delta_2)$ are said to be weakly proximal conjugates, provided there exists a proximal  isomorphism $h:(X,\delta_1) \to (Y,\delta_2)$ such that for any $A\in 2^X$,  $g\circ h(A)\ \delta_2\ h\circ f(A)$. Note that this also implies that $f\circ h^{-1}(C)\ \delta_1  \ h^{-1}\circ g(C)$ for any $C\in 2^Y$.
	The function $h$ is called a  weakly proximal conjugacy between $f$ and $g$. 
\end{definition}

\begin{theorem}
Let $h$ be a weakly proximal conjugacy between $f: (X,\delta_1) \to (X,\delta_1) $ and $g: (Y,\delta_2) \to (Y,\delta_2)$. Then for each $A\in 2^X$ and $n\in \mathbb{Z}_+$, we have\\ 
$h(f^n(A)) \ \delta_2 \ g^n(h(A))$.  
\end{theorem}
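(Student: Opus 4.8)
The plan is to prove the statement by induction on $n$, exactly paralleling the proof of Theorem~\ref{thm:desconj} but with descriptive equality $\underset{\mbox{des}}{=}$ replaced throughout by the weaker relation of descriptive nearness $\delta_2$. First I would set up the base case $n=1$: this is precisely the defining condition of a weakly proximal conjugacy from Definition~\ref{def:weakProxConjugacy}, namely $g\circ h(A)\ \delta_2\ h\circ f(A)$ for every $A\in 2^X$, which (after commuting the composition) gives $h(f(A))\ \delta_2\ g(h(A))$.

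For the inductive step I would assume the relation $h(f^n(A))\ \delta_2\ g^n(h(A))$ holds for some $n\geq 1$ and all $A\in 2^X$, and aim to derive it for $n+1$. The natural route is to apply the $n=1$ hypothesis to the set $f^n(A)$, yielding $h(f(f^n(A)))\ \delta_2\ g(h(f^n(A)))$, i.e. $h(f^{n+1}(A))\ \delta_2\ g(h(f^n(A)))$. Separately, the inductive hypothesis gives $h(f^n(A))\ \delta_2\ g^n(h(A))$, and since $g$ is proximally continuous it preserves nearness, so $g(h(f^n(A)))\ \delta_2\ g(g^n(h(A)))=g^{n+1}(h(A))$. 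Chaining these two nearness relations should produce $h(f^{n+1}(A))\ \delta_2\ g^{n+1}(h(A))$.

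The hard part will be the chaining step, because the proximity relation $\delta_2$ is in general \emph{not transitive}: from $a\ \delta_2\ b$ and $b\ \delta_2\ c$ one cannot simply conclude $a\ \delta_2\ c$. This is the essential difference from Theorem~\ref{thm:desconj}, where descriptive equality is a genuine equivalence relation and transitivity is free. To close the gap I would either invoke a stronger structural property available in this setting — for instance that the sets involved, being images of cells in a finite CW/vortex complex under proximally continuous maps, are themselves near or overlapping so that a \v{C}ech axiom (such as $A\ \delta\ (B\cup C)$ when $A\ \delta\ B$) can be leveraged — or restate the inductive step so that only a single application of the base relation together with $g$-continuity is needed, avoiding a true transitivity appeal. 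In a setting where the relevant descriptive classes coincide, the nearness relations collapse to equalities and transitivity is recovered; I expect the intended argument to rely on this or on an $EF$-proximity (Efremovič) axiom to bridge the two near pairs, and I would flag this as the step requiring the most care.
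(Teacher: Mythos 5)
Your suspicion about the chaining step is exactly right, and you should follow it to its conclusion: the step is not merely delicate, it is fatal, and the theorem as stated is false for general \v{C}ech proximities. Here is a counterexample. Let $X=Y=\{1,2,3,4,5\}$ with the \v{C}ech proximity $A\ \delta\ B$ if and only if there exist $a\in A$, $b\in B$ with $|a-b|\leq 1$ (axioms (P.0)--(P.3) are easily checked). Take $h=f=\mathrm{id}_X$ and $g(x)=\min(x+1,5)$. Then $f$ and $h$ are trivially proximally continuous, $g$ is proximally continuous because it is $1$-Lipschitz, and $h$ is a proximal isomorphism. For every nonempty $A$ we have $g\circ h(A)=g(A)\ \delta\ A=h\circ f(A)$, since each $x\in A$ satisfies $|g(x)-x|\leq 1$; so $f$ and $g$ are weakly proximal conjugates via $h$ in the sense of Definition~\ref{def:weakProxConjugacy}. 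Yet for $A=\{1\}$ and $n=2$ we get $h(f^2(A))=\{1\}$ and $g^2(h(A))=\{3\}$, and $\{1\}\ \not\delta\ \{3\}$. The conclusion already fails at $n=2$, precisely because the two nearness relations produced in your inductive step cannot be chained. Note also that appealing to the Efremovi\v{c} axiom would not help: EF-proximities (for example, metric proximity on $\mathbb{R}$) are still not transitive, so none of the rescue routes you sketch is available within the paper's hypotheses.

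For comparison, the paper's entire proof is the sentence ``the proof follows from the induction on $n$,'' which commits exactly the error you flagged: the induction yields $h(f^{n+1}(A))\ \delta_2\ g(h(f^n(A)))$ from the base case and $g(h(f^n(A)))\ \delta_2\ g^{n+1}(h(A))$ from the inductive hypothesis plus proximal continuity of $g$, and then silently concatenates them, i.e., uses transitivity of $\delta_2$. That concatenation is legitimate in Theorem~\ref{thm:desconj}, where the relation is descriptive \emph{equality} (an equivalence relation), but not here. So the gap is not in your reading; it is in the paper. The statement can only be rescued by strengthening the hypothesis so that the relations collapse to equalities (e.g., requiring $g\circ h=h\circ f$ on the nose, which is ordinary proximal conjugacy), or by weakening the conclusion to what the argument actually proves, namely the one-step relation $h(f^{n+1}(A))\ \delta_2\ g(h(f^n(A)))$ for every $n$.
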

\begin{proof}
The proof follows from the induction on $n$.
\end{proof}

\begin{definition}\label{def:weakProxDescrConjugates}
Two descriptive proximally continuous maps $f: (X,\delta_{\Phi_1}) \to (X,\delta_{\Phi_1}) $ and $g: (Y,\delta_{\Phi_2}) \to (Y,\delta_{\Phi_2})$ are said to be weakly proximal descriptive conjugates, provided there exists a proximal descriptive isomorphism $h: (X,\delta_{\Phi_1})  \to (Y,\delta_{\Phi_2})$ such that $g\circ h(A)\ \delta_{\Phi_2}\ h\circ f(A)$ for any $A \in 2^X$. Note that this also implies  $f\circ h^{-1}(C)\ \delta_{\Phi_2}\ h^{-1}\circ g(C)$ for any $C\in 2^Y$. The function $h$ is called a weakly proximal descriptive conjugacy between $f$ and $g$. 
\end{definition}

\begin{remark}
For weakly proximal descriptive conjugates $f: (X,\delta_{\Phi_1}) \to (X,\delta_{\Phi_1}) $ and $g: (Y,\delta_{\Phi_2}) \to (Y,\delta_{\Phi_2})$, Def.~\ref{def:weakProxDescrConjugates} and Lemma~\ref{lemma:dP2converse} tell us that for $A\in 2^X$ and $C\in 2^Y$, we have 
\begin{align*} 
g\circ h(A)\ &\dcap\  f\circ h(A)\neq \emptyset, \\
f\circ h^{-1}(C)\ &\dcap\ h^{-1}\circ g(C) \neq \emptyset..\qquad \mbox{\textcolor{blue}{\Squaresteel}}
\end{align*}
\end{remark}

\begin{appendices}\label{appDetails}
\section{Planar Vortexes}\label{sec:vortexes}
This section briefly looks at planar vortex structures in planar CW spaces.  
For simplicity, we consider only 2 cycle vortexes containing a pair of nested 1-cycles that intersect or attached to each other via at least one bridge edge.
%

\begin{definition} Planar 2 Cycle Vortex \label{def:ribbon}{\rm \cite{Peters2020AMSBullRibbonComplexes}}.\\
Let $\cyc A, \cyc B$ be a collection of path-connected vertexes on nested filled 1-cycles (with $\cyc B$ in the interior of $\cyc A$) defined on a finite, bounded, planar region in a CW space $K$.  A \emph{planar 2 cycle vortex} $\vor E$ is defined by
\[
\vor E = \overbrace{
\left\{\cl(\cyc A): \cl(\cyc B)\subset \Int(\cl(\cyc A))\right\}.}^{\mbox{\textcolor{blue}{\bf $\cl(\cyc B)$ is contained (nested) in the interior of $\cl(\cyc A)$.}}}\mbox{\textcolor{blue}{\Squaresteel}}
\]
\end{definition}

A vortex containing adjacent non-intersecting cycles has a bridge edge attached to vertexes on the cycles.

\begin{definition}
A vortex {\bf bridge edge} is an edge attached to vertexes on a pair of non-interecting, filled 1-cycles.
\end{definition}

\begin{figure}[!ht]
\centering
\subfigure[Vortex $\abs{\vor E}$ with non-intersecting 1-cycles $\cyc A, \cyc B$]
 {\label{fig:rbE}
\begin{pspicture}
(-1.5,-0.5)(4.0,3.0)
\psframe[linewidth=0.75pt,linearc=0.25,cornersize=absolute,linecolor=blue](-1.25,-0.25)(3.25,3)
\psline*[linestyle=solid,linecolor=green!30]%
(0,0)(1,0.5)(2.0,0.0)(3.0,0.5)(3.0,1.5)(2.0,2.0)(1,1.5)(0,2)
(-1,1.5)(-1,0.5)(0,0)
\psline[linestyle=solid,linecolor=black]%
(0,0)(1,0.5)(2.0,0.0)(3.0,0.5)(3.0,1.5)(2.0,2.0)(1,1.5)(0,2)(-1,1.5)(-1,0.5)(0,0)
\psdots[dotstyle=o,dotsize=2.5pt,linewidth=1.2pt,linecolor=black,fillcolor=white]%
(0,0)(1,0.5)(2.0,0.0)(3.0,0.5)(3.0,1.5)(2.0,2.0)(1,1.5)(0,2)(-1,1.5)(-1,0.5)(0,0)
\psline[linestyle=solid](1,1.5)(2,2)\psline[arrows=<->](-1,1.7)(-0.3,2.0)\psline[arrows=<->](0.2,2.05)(0.8,1.75)
\psline*[linestyle=solid,linecolor=gray!20]%
(0,0.25)(1,0.75)(2.0,0.25)(2.5,0.5)(2.5,0.75)(2.0,1.35)(1,1.25)(0,1.5)(-.55,1.25)(-.55,0.75)(0,0.25)
\psline[linestyle=solid,linecolor=black]%
(0,0.25)(1,0.75)(2.0,0.25)(2.5,0.5)(2.5,0.75)(2.0,1.35)(1,1.25)(0,1.5)(-.55,1.25)(-.55,0.75)(0,0.25)
\psline[linestyle=solid,linecolor=black]
(2.0,1.35)(2.0,2.0)
\psdots[dotstyle=o,dotsize=2.5pt,linewidth=1.2pt,linecolor=black,fillcolor=white]%
(0,0.25)(1,0.75)(2.0,0.25)(2.5,0.5)(2.5,0.75)(2.0,1.35)(1,1.25)(0,1.5)(-.55,1.25)(-.55,0.75)(0,0.25)
\psdots[dotstyle=o,dotsize=2.5pt,linewidth=1.2pt,linecolor=black,fillcolor=red]
(2.0,1.35)(2.0,2.0)
\rput(-1.0,2.75){\footnotesize $\boldsymbol{K}$}
\rput(0.0,2.25){\footnotesize $\boldsymbol{\abs{\vor E}}$}
\rput(2.8,1.85){\footnotesize $\boldsymbol{cyc A}$}
\rput(2.5,1.25){\footnotesize $\boldsymbol{cyc B}$}
\rput(2.0,2.15){\footnotesize $\boldsymbol{p}$}
\rput(1.9,1.15){\footnotesize $\boldsymbol{q}$}
\end{pspicture}}\hfil
\subfigure[Vortex $\abs{\vor E'}$ with intersecting 1-cycles $\cyc A',\cyc B'$]
 {\label{fig:rbNrvE}
\begin{pspicture}
(-1.5,-0.5)(4.0,3.0)
\psframe[linewidth=0.75pt,linearc=0.25,cornersize=absolute,linecolor=blue](-1.45,-0.25)(3.25,3)
\psline*[linestyle=solid,linecolor=green!30]%
(0,0)(1,0.5)(2.0,0.0)(3.0,0.5)(3.0,1.5)(2.0,2.0)(1,1.5)(0,2)(-1,1.5)(-1,0.5)(0,0)
\psline[linestyle=solid,linecolor=black]%
(0,0)(1,0.5)(2.0,0.0)(3.0,0.5)(3.0,1.5)(2.0,2.0)(1,1.5)(0,2)(-1,1.5)(-1,0.5)(0,0)
\psdots[dotstyle=o,dotsize=2.5pt,linewidth=1.2pt,linecolor=black,fillcolor=white]%
(0,0)(1,0.5)(2.0,0.0)(3.0,0.5)(3.0,1.5)(2.0,2.0)(1,1.5)(0,2)(-1,1.5)(-1,0.5)(0,0)
\psline[arrows=<->](-1,1.7)(-0.3,2.0)
\psline[arrows=<->](0.2,2.05)(0.8,1.75)
\psline*[linestyle=solid,linecolor=gray!20]%
(0,0.25)(1,0.5)(2.0,0.25)(2.5,0.5)(2.5,0.75)(2.0,2.0)(1,1.25)(0,1.5)(-.55,1.25)(-1,1.5)(0,0.25)
\psline[linestyle=solid,linecolor=black]%
(0,0.25)(1,0.5)(2.0,0.25)(2.5,0.5)(2.5,0.75)(2.0,2.0)(1,1.25)(0,1.5)(-.55,1.25)(-1,1.5)(0,0.25) 
\psdots[dotstyle=o,dotsize=2.5pt,linewidth=1.2pt,linecolor=black,fillcolor=white]%
(0,0.25)(1,0.5)(2.0,0.25)(2.5,0.5)(2.5,0.75)(2.0,2.0)(1,1.25)(0,1.5)(-.55,1.25)(-1,1.5)(0,0.25)
\psdots[dotstyle=o,dotsize=2.5pt,linewidth=1.2pt,linecolor=black,fillcolor=red!80]
(-1,1.5)(2.0,2.0)(1,0.5)
\rput(-1.0,2.75){\footnotesize $\boldsymbol{K'}$}
\rput(-1.2,1.6){\footnotesize $\boldsymbol{g_1}$}
\rput(2.0,2.15){\footnotesize $\boldsymbol{g_2}$}
\rput(1,0.65){\footnotesize $\boldsymbol{g_3}$}
\rput(2,0.45){\footnotesize $\boldsymbol{b}$}
\rput(0.0,2.25){\footnotesize $\boldsymbol{\abs{\vor E'}}$}
\rput(2.8,1.85){\footnotesize $\boldsymbol{cyc A'}$}
\rput(2.5,1.25){\footnotesize $\boldsymbol{cyc B'}$}
l\end{pspicture}}
\caption[]{Sample planar 2-cycle vortexes}
\label{fig:2Ribbons}
\end{figure}

\begin{remark}
From Def.~\ref{def:ribbon}, the cycles in a 2 cycle vortex can either have nonempty intersection (see, {\em e.g.}, $\cyc A'\cap\cyc B'\neq \emptyset$ in $\abs{\vor E'}$ in Fig.~\ref{fig:rbNrvE}) or there is a bridge edge between the cycles (see, {\em e.g.}, $\arc{pq}$ $\abs{\vor E}$  in Fig.~\ref{fig:rbE}). In effect, every pair of vertexes in a 2 cycle vortex is path-connected.  
\textcolor{blue}{\Squaresteel}
\end{remark}


\begin{remark}
The structure of a 2 cycle vortex extends to a vortex with $k > 2$ nested filled 1-cycles, provided adjacent pairs of cycles $\cyc A,\cyc A'$ in a $k$-cycle vortex either intersect or there is a bridge edge attached between $\cyc A,\cyc A'$.
\textcolor{blue}{\Squaresteel}
\end{remark}

\section{Free Group Representation of a Vortex}

A finite group $G$ is free, provided every element $x\in G$ is
a linear combination of its basis elements (called generators).
We write $\mathcal{B}$ to denote a nonempty set of generators $\left\{g_1,\dots,g_{\abs{\mathcal{B}}}\right\}$ and $G(\mathcal{B},+)$ to denote the free group with binary operation $+$.

\begin{example}
The basis $\left\{g_1,g_2,g_3\right\}$ generates a group $G$ whose geometric realization is $\abs{\vor E'}$ in Fig.~\ref{fig:rbNrvE}. The $+$ operation on $G$ corresponds to a move from a generator to a neighbouring vertex. For example,
\begin{align*}
b &= \overbrace{3g_1 + 4g_2}^{\mbox{\textcolor{blue}{\bf traversing 3 $\boldsymbol{\cyc A'}$ \& 4 $\boldsymbol{\cyc B'}$\ edges to reach $\boldsymbol{b}$ via $\boldsymbol{g_1,g_3}$}}}\\
  &= \overbrace{2g_1 + 1g_3}^{\mbox{\textcolor{blue}{\bf traversing 3 $\boldsymbol{\cyc B'}$ edges to reach $\boldsymbol{b}$ via $\boldsymbol{g_1,g_2}$}}}\\
	&= \overbrace{0g_1 + 1g_3 = 1g_3.}^{\mbox{\textcolor{blue}{\bf traversing 1 $\boldsymbol{\cyc B'}$ edge to reach $\boldsymbol{b}$ via $\boldsymbol{g_3}$}}}
\end{align*}

The identity element 0 in $G$ is represented by a zero move from a generator $g$ to another vertex (denoted by $0g$) and an inverse in $G$ is represented by a reverse move $-g$.
\quad\textcolor{blue}{\Squaresteel}
\end{example}

\begin{definition}
Let $2^K$ be the collection of cell complexes in a CW space $K$, vortex $\abs{\vor E}\in 2^K$, basis $\mathcal{B}\in\abs{\vor E}$, $k_i$ the $i^{th}$ integer coeficient in a linear combination $\mathop{\sum}\limits_{i,j}k_ig_j$ of generating elements $g_j\in \mathcal{B}$.  A free group $G$ {\bf representation} of $\abs{\vor E}$ is a continuous self-map $f:2^K\to 2^K$ defined by
\begin{align*}
f(\abs{\vor E}) &= \left\{v \assign{\mathop{\sum}\limits_{i,j}k_ig_j}: v\in \abs{\vor E},g_j\in \mathcal{B}\right\}\\
          &= \overbrace{\boldsymbol{G(\left\{ g_1,\dots,g_{_{\abs{\tiny \mathcal{B}}}}\right\}},+).}^{\mbox{\textcolor{blue}{\bf $\boldsymbol{\abs{\vor E}}\mapsto$ free group $\boldsymbol{G}$}}}
\end{align*}
\end{definition}


\section{Descriptive Proximity Spaces}\label{sec:Cech}
This section briefly introduces descriptive \v{C}ech proximity spaces, paving the way for descriptive proximally continuous maps. 
The simplest form of proximity relation (denoted by $\delta$) on a nonempty set was intoduced by E.\v{C}ech~\cite{Cech1966}.  A nonempty set $X$ equipped with the relation $\near$ is a \v{C}ech proximity space (denoted by $(X,\near$)), provided the following axioms are satisfied.\\
\vspace{3mm}

\noindent {\bf \v{C}ech Axioms}

\begin{description}
	\item[({\bf P}.0)] All nonempty subsets in $X$ are far from the empty set, {\em i.e.}, $A\ \not{\near}\ \emptyset$ for all $A\subseteq X$.
\item[({\bf P}.1)] $A\ \near\ B \Rightarrow B\ \near\ A$.
\item[({\bf P}.2)] $A\ \cap\ B\neq \emptyset \Rightarrow A\ \near\ B$.
\item[({\bf P}.3)] $A\ \near\ \left(B\cup C\right) \Rightarrow A\ \near\ B$ or $A\ \near\ C$.
\end{description}

 Given that a nonempty set $E$ has $k \geq 1$ features such as Fermi energy $E_{Fe}$, cardinality $E_{card}$, a description $\Phi(E)$ of $E$ is a feature vector, {\em i.e.}, $\Phi(E) = \left(E_{Fe},E_{card}\right)$. Nonempty sets $A,B$ with overlapping descriptions are descriptively proximal (denoted by $A\ \dnear\ B$). 
  The descriptive intersection of nonempty subsets in $A\cup B$ (denoted by $A\ \dcap\ B$) is defined by
\[
A\ \dcap\ B = \overbrace{\left\{x\in A\cup B: \Phi(x) \in \Phi(A)\ \cap\ \Phi(B)\right\}.}^{\mbox{\textcolor{blue}{\bf {\em i.e.}, $\boldsymbol{\mbox{Descriptions}\ \Phi(A)\ \&\ \Phi(B)\ \mbox{overlap}}$}}}
\] 

Let $2^X$ denote the collection of all subsets in a nonvoid set $X$. A nonempty set $X$ equipped with the relation $\dnear$ with non-void subsets $A,B,C\in 2^X$ is a descriptive proximity space, provided the following descriptive forms of the \v{C}ech axioms are satisfied.\\
\vspace{3mm}

\noindent {\bf Descriptive \v{C}ech Axioms}

\begin{description}
\item[({\bf dP}.0)] All nonempty subsets in $2^X$ are descriptively far from the empty set, {\em i.e.}, $A\ \not{\dnear}\ \emptyset$ for all $A\in 2^X$.
\item[({\bf dP}.1)] $A\ \dnear\ B \Rightarrow B\ \dnear\ A$.
\item[({\bf dP}.2)] $A\ \dcap\ B\neq \emptyset \Rightarrow A\ \dnear\ B$.
\item[({\bf dP}.3)] $A\ \dnear\ \left(B\cup C\right) \Rightarrow A\ \dnear\ B$ or $A\ \dnear\ C$.
\end{description}

\noindent The converse of Axiom ({\bf dp}.2) also holds.
\begin{lemma}\label{lemma:dP2converse}{\rm \cite{Peters2019vortexNerves}}
Let $X$ be equipped with the relation $\dnear$, $A,B\in 2^X$.   Then $A\ \dnear\ B$ implies $A\ \dcap\ B\neq \emptyset$.
\end{lemma}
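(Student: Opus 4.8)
The plan is to unwind the two definitions involved and exhibit an explicit witness point. By the definition of the descriptive nearness relation given just before the \v{C}ech axioms, $A\ \dnear\ B$ holds precisely when the descriptions of $A$ and $B$ overlap, that is, when $\Phi(A)\cap\Phi(B)\neq\emptyset$. I would begin by fixing a feature vector $v\in\Phi(A)\cap\Phi(B)$, whose existence is exactly what the hypothesis $A\ \dnear\ B$ supplies.

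Next I would use the membership $v\in\Phi(A)$ to produce a concrete point of $X$. Since $\Phi(A)=\{\Phi(a):a\in A\}$ is the image of $A$ under the probe function $\Phi$, there is some $a\in A$ with $\Phi(a)=v$. This $a$ lies in $A\subseteq A\cup B$, and by construction its description $\Phi(a)=v$ belongs to $\Phi(A)\cap\Phi(B)$.

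Finally I would check that $a$ meets the defining condition of the descriptive intersection. Recall that
\[
A\ \dcap\ B=\left\{x\in A\cup B:\Phi(x)\in\Phi(A)\cap\Phi(B)\right\},
\]
so the two facts $a\in A\cup B$ and $\Phi(a)\in\Phi(A)\cap\Phi(B)$ together give $a\in A\ \dcap\ B$. Hence $A\ \dcap\ B\neq\emptyset$, which is the claim.

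There is essentially no serious obstacle here: the lemma is the converse half of Axiom ({\bf dP}.2) and amounts to the observation that ``overlapping descriptions'' is interchangeable with ``nonempty descriptive intersection.'' The only point demanding a little care is that $\Phi(A)\cap\Phi(B)$ is an intersection of feature-vector sets living in the description space, whereas $A\ \dcap\ B$ is a set of points of $X$; the passage from the former to the latter is precisely the step of pulling a shared feature vector $v$ back to a point $a\in A$ that realizes it, and it is this pullback that must be made explicit to complete the argument.
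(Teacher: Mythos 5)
Your proposal is correct and follows essentially the same route as the paper's own proof: both unwind the definition of $\dnear$ (a shared description between $A$ and $B$) and exhibit a witness point whose description lies in $\Phi(A)\cap\Phi(B)$, hence in $A\ \dcap\ B$. The only cosmetic difference is that you pull back a common feature vector $v$ to a single point $a\in A$, whereas the paper names two points $x\in A$, $y\in B$ with $\Phi(x)=\Phi(y)$ and places both in the descriptive intersection.
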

\begin{proof}
Let $A,B\in 2^X$. By definition, $A\ \dnear\ B$ implies that there is at least one member $x\in A$ and $y\in B$ so that $\Phi(x) = \Phi(y)$, {\em i.e.}, $x$ and $y$ have the same description.  Then $x,y\in A\ \dcap\ B$.
Hence, $A\ \dcap\ B\neq \emptyset$, which is the converse of ({\bf dp}.2).    
\end{proof}

\begin{theorem}\label{theorem:dP2result}
Let $K$ be a cell complex, $Vor(K)\subset K$ a collection of planar vortexes equipped with the proximity $\dnear$ and let $\vor A,\vor B\in Vor(K)$. Then $\vor A\ \dnear\ \vor B$ implies $\vor A\ \dcap\ \vor B\neq \emptyset$.
\end{theorem}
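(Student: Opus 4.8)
The plan is to treat this theorem as a direct instance of Lemma~\ref{lemma:dP2converse}. By Definition~\ref{def:ribbon} each planar vortex is a finite nonempty cell complex, hence a nonempty subset of the cells of $K$; thus $\vor A, \vor B \in 2^K$, and the descriptive proximity $\dnear$ together with the descriptive intersection $\dcap$ are defined on them through a probe function $\Phi$ exactly as in the \v{C}ech framework of Section~\ref{sec:Cech}. So the abstract hypotheses of Lemma~\ref{lemma:dP2converse} are met with $X = K$, $A = \vor A$, and $B = \vor B$, and the whole task reduces to transcribing that lemma's argument into the vortex setting.

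First I would unpack the hypothesis $\vor A\ \dnear\ \vor B$ using the definition of descriptive proximity: the descriptions $\Phi(\vor A)$ and $\Phi(\vor B)$ overlap, so there exist vertices $x \in \vor A$ and $y \in \vor B$ whose feature vectors coincide, $\Phi(x) = \Phi(y)$. Next I would feed these witnesses into the definition of the descriptive intersection, $\vor A\ \dcap\ \vor B = \{z \in \vor A \cup \vor B : \Phi(z) \in \Phi(\vor A)\cap \Phi(\vor B)\}$. Since the common value $\Phi(x) = \Phi(y)$ lies in both $\Phi(\vor A)$ and $\Phi(\vor B)$, both $x$ and $y$ belong to $\vor A\ \dcap\ \vor B$, and therefore $\vor A\ \dcap\ \vor B \neq \emptyset$, as required.

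Because this argument is essentially verbatim that of Lemma~\ref{lemma:dP2converse}, there is no substantive obstacle. The only point meriting care is the bookkeeping step of confirming that $(Vor(K), \dnear)$ genuinely inherits the structure of a descriptive proximity space, so that each vortex is an admissible argument of $\dnear$ and $\dcap$. This is immediate from the finiteness of each planar vortex and the fact that every vortex carries a well-defined description under $\Phi$, which is precisely what licenses the direct appeal to Lemma~\ref{lemma:dP2converse}.
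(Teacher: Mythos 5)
Your proposal is correct and follows exactly the paper's route: the paper's proof is simply ``Immediate from Lemma~\ref{lemma:dP2converse},'' and you instantiate that lemma with $X = K$, $A = \vor A$, $B = \vor B$ and re-trace its witness argument. The extra bookkeeping you include (that vortexes are nonempty subsets of $2^K$ carrying descriptions under $\Phi$) is a reasonable elaboration of what the paper leaves implicit, but it is not a different method.
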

\begin{proof}
Immediate from Lemma~\ref{lemma:dP2converse}. 
\end{proof}

Let $(X,\delta_1)$ and $(Y,\delta_2)$ be two \v{C}ech proximity spaces. Then a map $f: (X,\delta_1)\to (Y,\delta_2)$ is \emph{proximal continuous}, provided  $A\ \delta_1\ B$ implies $f(A)\ \delta_2\ f(B)$, \emph{i.e.},\\ $f(A)\ \delta_2\ f(B)$, provided $f(A)\ \cap\ f(B)\neq \emptyset$ for $A, B\in 2^X$~\cite[\S 1.4]{Naimpally70}.  In general, a proximal continuous function preserves the nearness of pairs of sets~\cite[\S 1.7,p. 16]{Naimpally2013}. Further, $f$ is a \emph{proximal isomorphism}, provided $f$ is proximal continuous with a proximal continuous inverse $f^{-1}$.

Let $(X,\delta_{\Phi_1})$ and $(Y,\delta_{\Phi_2})$  be  descriptive proximity spaces with  probe functions $\Phi_1: X \to \mathbb{R}^n$, $\Phi_2: Y \to \mathbb{R}^n$, and  $A,B \in 2^X$. Then a map $f : (X,\delta_{\Phi_1})\to (Y,\delta_{\Phi_2})$ is said to be \emph{descriptive proximally continuous}, provided $A\ \delta_{\Phi_1}\ B$ implies $f(A)\ \delta_{\Phi_2}\ f(B)$, \emph{i.e.}, $f(A)\ \delta_{\Phi_2} \ f(B)$, provided $f(A)\ \dcap \ f(B)\neq \emptyset$. Further $f$ is a \emph{descriptive proximal  isomorphism}, provided $f$ and its inverse $f^{-1}$  are  descriptively proximally continuous. \\

\begin{definition}
	Let $(X,\delta_{\Phi})$ be a descriptive  \v{C}ech proximity space and $f: (X, \delta_{\Phi}) \to (X, \delta_{\Phi})$ a descriptive proximally continuous map. A set $A \in 2^X$ is said to be descriptively invariant with respect to $f$, provided $\Phi(f(A))\subseteq \Phi(A)$. 
\end{definition}

Notice that if $A$ is a descriptively invariant set with respect to $f$, then $\Phi(f^n(A))\subseteq \Phi(A)$ for all positive integer $n$.

\begin{theorem}
	Let $(X, \delta_{\Phi})$ be a descriptive \v{C}ech proximity space and  $f: (X, \delta_{\Phi}) \to (X, \delta_{\Phi})$ a proximal descriptive continuous map. If $\{ A_i\}_{i \in I} \subseteq 2^X$ is a collection of descriptively invariant sets with respect to $f$, then
	\begin{compactenum}
		\item[i)] 	$\cup_{i\in I} A_i$ is   descriptively invariant with respect to  $f$, and 
		\item[ii)] $\cap_{i\in I} A_i$ is   descriptively invariant  with respect to  $f$. 
	\end{compactenum}
\end{theorem}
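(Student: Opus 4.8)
The plan is to verify, in each case, the defining inclusion of descriptive invariance, namely $\Phi(f(A)) \subseteq \Phi(A)$, for the two choices $A = \bigcup_{i\in I} A_i$ and $A = \bigcap_{i\in I} A_i$. Throughout I would lean on the elementary set-image identities: for any map, the image of a union is the union of the images, so that $f\!\left(\bigcup_i A_i\right) = \bigcup_i f(A_i)$ and $\Phi\!\left(\bigcup_i S_i\right) = \bigcup_i \Phi(S_i)$, whereas for intersections one has only the inclusions $f\!\left(\bigcap_i A_i\right) \subseteq \bigcap_i f(A_i)$ and $\Phi\!\left(\bigcap_i S_i\right) \subseteq \bigcap_i \Phi(S_i)$. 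The hypothesis supplies, for every $i$, the termwise invariance $\Phi(f(A_i)) \subseteq \Phi(A_i)$.

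For part i) I would chain these facts directly. Since the image and the probe both distribute over unions,
\[
\Phi\!\left(f\!\left(\bigcup_i A_i\right)\right) = \Phi\!\left(\bigcup_i f(A_i)\right) = \bigcup_i \Phi(f(A_i)) \subseteq \bigcup_i \Phi(A_i) = \Phi\!\left(\bigcup_i A_i\right),
\]
where the single inclusion is the termwise hypothesis applied inside the union. This settles i) with no further work.

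For part ii) the same template runs into the genuine obstacle of the argument. Writing $B = \bigcap_i A_i$ and pushing $B$ through $f$ and then $\Phi$, the only inclusions available are the ones pointing inward, and they combine to give
\[
\Phi(f(B)) \subseteq \Phi\!\left(\bigcap_i f(A_i)\right) \subseteq \bigcap_i \Phi(f(A_i)) \subseteq \bigcap_i \Phi(A_i).
\]
Thus the natural argument delivers $\Phi(f(B)) \subseteq \bigcap_i \Phi(A_i)$, but the target is the a priori smaller set $\Phi(B) = \Phi\!\left(\bigcap_i A_i\right)$, and in general $\Phi\!\left(\bigcap_i A_i\right) \subseteq \bigcap_i \Phi(A_i)$ is a strict inclusion. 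The crux is therefore to upgrade $\bigcap_i \Phi(A_i)$ to $\Phi\!\left(\bigcap_i A_i\right)$.

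To attack this I would try an element chase: fix $c \in \Phi(f(B))$, so that $c = \Phi(f(x))$ for some $x \in B$. Since $x \in A_i$ for every $i$, termwise invariance gives $c \in \Phi(A_i)$ for every $i$, whence for each $i$ there is a witness $y_i \in A_i$ with $\Phi(y_i) = c$. The difficulty, and the step I expect to be the true obstacle, is that these witnesses need not coincide, so there is no evident single $y \in \bigcap_i A_i$ with $\Phi(y) = c$. To close this gap I would look for an extra structural hypothesis in the vortex and CW setting that forces images to commute with intersections; for instance, if $f$ and $\Phi$ are injective on the relevant complexes, then $f\!\left(\bigcap_i A_i\right) = \bigcap_i f(A_i)$ and $\Phi\!\left(\bigcap_i S_i\right) = \bigcap_i \Phi(S_i)$ hold with equality, the union-style chain goes through verbatim, and ii) follows. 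Absent such an assumption, the strongest conclusion the method guarantees is the weaker inclusion $\Phi(f(B)) \subseteq \bigcap_i \Phi(A_i)$, and it is precisely this point that any complete proof of ii) must confront.
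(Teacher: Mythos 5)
Your part i) is exactly the paper's proof: distribute $f$ and $\Phi$ over the union and apply the termwise hypothesis inside the union. For part ii), the obstacle you isolated is not a weakness of your proposal relative to the paper --- it is a gap in the paper itself. The paper's proof of ii) is literally the chain you wrote,
\[
\Phi\left(f\left(\bigcap_{i\in I} A_i\right)\right) \subseteq \Phi\left(\bigcap_{i\in I} f(A_i)\right) \subseteq \bigcap_{i\in I} \Phi(f(A_i)) \subseteq \bigcap_{i\in I} \Phi(A_i),
\]
and it stops there, tacitly reading $\bigcap_{i}\Phi(A_i)$ as if it were $\Phi\bigl(\bigcap_{i}A_i\bigr)$. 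Since $\Phi\bigl(\bigcap_i A_i\bigr)\subseteq \bigcap_i \Phi(A_i)$ can be strict, the paper establishes only the weaker inclusion, precisely as you observed.

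Moreover, your suspicion that the gap cannot be closed without extra hypotheses is correct: part ii) is false as stated. Take $X=\{a,b,c\}$ with $\Phi(a)=\Phi(b)=0$ and $\Phi(c)=1$; define $f(a)=f(c)=a$, $f(b)=b$; and let $A_1=\{a,c\}$, $A_2=\{b,c\}$. Then $f$ is descriptive proximally continuous, since every nonempty set has image contained in $\{a,b\}$, hence image description $\{0\}$, so images of any two nonempty (in particular, descriptively near) sets are descriptively near. Both $A_i$ are descriptively invariant, since $\Phi(f(A_i))=\{0\}\subseteq\{0,1\}=\Phi(A_i)$. But $A_1\cap A_2=\{c\}$ gives $\Phi(f(A_1\cap A_2))=\Phi(\{a\})=\{0\}\not\subseteq\{1\}=\Phi(A_1\cap A_2)$, so the intersection is not descriptively invariant. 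Thus your element chase fails for an unavoidable reason: the witnesses $y_i\in A_i$ with $\Phi(y_i)=c$ genuinely need not coincide, and a correct version of ii) requires an additional hypothesis of the kind you indicate (for instance, injectivity of $\Phi$, which collapses $\bigcap_i\Phi(A_i)$ to $\Phi\bigl(\bigcap_i A_i\bigr)$), or else the conclusion must be weakened to $\Phi\bigl(f\bigl(\bigcap_i A_i\bigr)\bigr)\subseteq\bigcap_i\Phi(A_i)$.
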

\begin{proof}
	From our assumption, we have $\Phi(f(A_i))\subseteq \Phi(A_i)$ for all $i\in I$ so that
	
	\begin{compactenum}
		\item[i)] \begin{align*}
		f(\cup_{i\in I} A_i) &= \cup_{i\in I} f(A_i) \\
		\Phi(f(\cup_{i\in I} A_i)) & = \Phi(\cup_{i\in I} f(A_i)) \\
		&=\cup_{i\in I} \Phi (f(A_i)) \\
		& \subseteq \cup_{i\in I} \Phi(A_i)
		\end{align*}
		and 
		
		\item[ii)] \begin{align*}
		f(\cap_{i\in I} A_i) &\subseteq  \cap_{i\in I} f(A_i) \\
		\Phi(f(\cap_{i\in I} A_i)) &\subseteq \Phi(\cap_{i\in I} f(A_i)) \\
		&\subseteq \cap_{i\in I} \Phi (f(A_i)) \\
		& \subseteq \cap_{i\in I} \Phi(A_i)
		\end{align*}
	\end{compactenum}
\end{proof}

\begin{theorem}
	Let $(X, \delta_{\Phi})$ be a descriptive \v{C}ech proximity space and  $f: (X, \delta_{\Phi}) \to (X, \delta_{\Phi})$ a descriptive proximally continuous map. If $A \in 2^X$ is  descriptively invariant with respect to $f$ then $\cl_{\delta_{\Phi}} A$ is also  descriptively invariant with respect to with respect to $f$.
\end{theorem}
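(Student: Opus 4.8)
The plan is to first pin down what the descriptive closure $\cl_{\delta_{\Phi}} A$ actually is in terms of descriptions, and then to chase descriptions through $f$ using descriptive proximal continuity together with the invariance hypothesis $\Phi(f(A)) \subseteq \Phi(A)$. The whole argument reduces to a single description-membership computation once the closure is correctly identified.

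First I would establish the characterization $\cl_{\delta_{\Phi}} A = \{x \in X : \Phi(x) \in \Phi(A)\}$. By definition, a point $x$ lies in the descriptive closure precisely when $\{x\}\ \delta_{\Phi}\ A$. Combining Axiom \textbf{(dP.2)} with its converse, Lemma~\ref{lemma:dP2converse}, the relation $\{x\}\ \delta_{\Phi}\ A$ is equivalent to $\{x\}\ \dcap\ A \neq \emptyset$; unwinding the definition of $\dcap$, this holds exactly when $\Phi(x) \in \Phi(A)$. An immediate and useful consequence is that passing to the closure does not enlarge the description, i.e. $\Phi(\cl_{\delta_{\Phi}} A) = \Phi(A)$: the inclusion $A \subseteq \cl_{\delta_{\Phi}} A$ gives $\Phi(A) \subseteq \Phi(\cl_{\delta_{\Phi}} A)$, while the characterization gives the reverse inclusion.

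Next I would take an arbitrary $x \in \cl_{\delta_{\Phi}} A$ and show $f(x) \in \cl_{\delta_{\Phi}} A$. Since $\{x\}\ \delta_{\Phi}\ A$, descriptive proximal continuity of $f$ yields $\{f(x)\} = f(\{x\})\ \delta_{\Phi}\ f(A)$, and hence $\Phi(f(x)) \in \Phi(f(A))$ by Lemma~\ref{lemma:dP2converse}. Now the descriptive invariance of $A$ gives $\Phi(f(A)) \subseteq \Phi(A)$, so $\Phi(f(x)) \in \Phi(A)$, and by the characterization above this is exactly the statement $f(x) \in \cl_{\delta_{\Phi}} A$. Thus $f(\cl_{\delta_{\Phi}} A) \subseteq \cl_{\delta_{\Phi}} A$, which in particular forces $\Phi(f(\cl_{\delta_{\Phi}} A)) \subseteq \Phi(\cl_{\delta_{\Phi}} A)$ --- precisely descriptive invariance of the closure.

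I expect the only delicate step to be the first one: verifying that the descriptive closure coincides with the description-preimage $\{x : \Phi(x) \in \Phi(A)\}$, and in particular the identity $\Phi(\cl_{\delta_{\Phi}} A) = \Phi(A)$. Everything downstream is a short description chase through a singleton, so getting this identification right is what makes the proof work. A minor point to handle carefully is the use of $f(\{x\}) = \{f(x)\}$ as a genuine singleton, so that the membership $\Phi(f(x)) \in \Phi(f(A))$ follows cleanly from the definition of $\delta_{\Phi}$ rather than merely giving a nonempty descriptive overlap.
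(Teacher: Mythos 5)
Your proposal is correct and follows essentially the same route as the paper's own proof: take $x$ in the descriptive closure, use Lemma~\ref{lemma:dP2converse} together with descriptive proximal continuity to get $\Phi(f(x)) \in \Phi(f(A))$, apply the invariance hypothesis $\Phi(f(A)) \subseteq \Phi(A)$, and conclude $f(x) \in \cl_{\Phi} A$, hence $f(\cl_{\Phi} A) \subseteq \cl_{\Phi} A$. The only difference is cosmetic: you isolate the characterization $\cl_{\delta_{\Phi}} A = \left\{x \in X : \Phi(x) \in \Phi(A)\right\}$ as an explicit preliminary step, whereas the paper performs the same description chase pointwise without stating it.
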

\begin{proof}
The descriptive closure of a subset $A$ of $X$ is defined in~\cite[\S 1.21.2]{Peters2013springer} as follows: 	
\[ 
\cl_{\Phi} A=\{ x\in X \ | \ x \ \delta_{\Phi} \ A \}.
\]
Take  an element $x$ in  $ \cl_{\Phi} A$ so that  $x\ \delta_{\Phi} \ A$ and  $\Phi(x) \in \Phi(A)$ by Lemma~\ref{lemma:dP2converse}.     Since $f$ is a descripitive proximally continuous $f(x) \ \delta_{\Phi} \ f(A)$ and $\Phi(f(x))\in \Phi(f(A))$ by Lemma~\ref{lemma:dP2converse}. We also have $\Phi(f(x))\in \Phi(A)$ since $A$ is an invariant set with respect to $f$. Therefore  $f(x)\ \delta_{\Phi} \ A$  and   $f(x) \in \cl_{\Phi} A$. Since this holds for all $x \in \cl_{\Phi}A$, we have $f(\cl_{\Phi} A) \subseteq \cl_{\Phi} A$ so that         $\Phi(f(\cl_{\Phi} A)) \subseteq \Phi(\cl_{\Phi} A)$.
\end{proof}

\end{appendices}

\bibliographystyle{amsplain}
\bibliography{NSrefs}

\end{document}